\newtheorem{lemma}{Lemma}
\newtheorem{proposition}{Proposition}
\newtheorem{theorem}{Theorem}
\newtheorem{remark}{Remark}
\newtheorem{definition}{Definition}
\newcommand{\kla}{\left ( }
\newcommand{\mer}{\right ) }
\newcommand{\noo}{\left \|}
\newcommand{\rrm}{\right \|}
\newcommand{\bet}{\left |}
\newcommand{\rag}{\right |}
\newcommand{\equa}{\begin{eqnarray*}}
\newcommand{\tion}{\end{eqnarray*}}
\newcommand{\pl}{\hspace{.1cm}}
\newcommand{\la}{\lambda}
\newcommand{\vare}{\varepsilon}
\newcommand{\sptext}[3]{\hspace{#1 em}\mbox{#2}\hspace{#3 em}}
\renewcommand{\P}{\mathbbm{P}}
\newcommand{\Q}{\mathbbm{Q}}
\newcommand{\R}{\mathbbm{R}}
\newcommand{\E}{\mathbbm{E}}
\newcommand{\rh}{\mathcal{RH}}
\newcommand{\bmo}{{\rm BMO}}
\def\cF{{\mathcal F}}
\def\cG{{\mathcal G}}
\def \ept{\E^{\cF_t}_\P}
\def \eqt{\E^{\cF_t}_\Q}
\def \equ{\E^{\cF_u}_\Q}
\begin{document}

\title{Fractional smoothness of functionals of diffusion processes under a change of measure}

\author{Stefan Geiss\footnote{This research is supported by the Project 133914 of the Academy of Finland.}
        \pl\pl
        and
        Emmanuel Gobet \footnote{This research is part of the Chair {\it Financial Risks} of the {\it Risk
Foundation}, the Chair {\it Derivatives of the Future} and the Chair {\it Finance and Sustainable Development}.} \\ \\
        Department of Mathematics  \\
        University of Innsbruck \\
        Technikerstra\ss e 13/7 \\
        A-6020 Innsbruck \\
        Austria \\
        stefan.geiss@uibk.ac.at \\ \\
        Centre de Math\'ematiques Appliqu\'ees and CNRS\\
        Ecole Polytechnique\\
        F-91128 Palaiseau Cedex\\
        France \\
        emmanuel.gobet@polytechnique.edu}
        
\maketitle

\begin{abstract}
Let $v:[0,T]\times \R^d \to \R$ be the solution of the parabolic backward equation
$\partial_t v + (1/2) \sum_{i,l} [\sigma \sigma^\perp]_{il} \partial_{x_i}\partial_{x_l} v
+ \sum_{i} b_i \partial_{x_i}v + kv =0$ with terminal condition
$g$, where the coefficients are time- and state-dependent, and satisfy certain regularity assumptions.
Let $X=(X_t)_{t\in [0,T]}$ be the associated $\R^d$-valued 
diffusion process on some appropriate $(\Omega,\cF,\Q)$. For $p\in [2,\infty)$ and
a measure $d\P=\lambda_T d\Q$, where $\lambda_T$ satisfies the Muckenhoupt condition $A_\alpha$ for $\alpha \in (1,p)$, 
we relate the behavior of 
$\|g(X_T)-\ept g(X_T) \|_{L_p(\P)}$,
$\|\nabla v(t,X_t)  \|_{L_p(\P)}$ and
$\|D^2 v(t,X_t)  \|_{L_p(\P)}$ to each other, where
$D^2v:=(\partial_{x_i}\partial_{x_l}v)_{i,l}$ is the Hessian matrix.

\end{abstract}
        

\section{Introduction}

For a fixed time-horizon $T>0$ let $(\Omega,\cF,(\cF_t)_{t\in [0,T]},\Q)$ be a filtered probability space where $(\Omega,\cF,\Q)$ is complete, $\cF=\cF_T$, the filtration $(\cF_t)_{t\in [0,T]}$ is right-continuous,
$\cF_0$ is generated by the null sets of $\cF$ and where all local martingales are continuous
(see Section \ref{sec:setting}).
Assume for some $d\ge 1$ that the process $B=(B_t)_{t\in [0,T]}$ is a $d$-dimensional 
$(\cF_t)_{t\in [0,T]}$-standard 
Brownian motion starting in zero.
We consider an $\R^d$-valued diffusion process $X=(X_t)_{t\in [0,T]}$, solution to the stochastic differential equation 
\equa
X_t  = x_0 + \int_0^t \sigma(s,X_s) dB_s + \int_0^t b(s,X_s) ds
\tion 
for some smooth bounded coefficients $b$ and $\sigma$, and we focus on the rate of convergence of 
\[ R^X_p(t):=\|g(X_T)-\E(g(X_T)|\cF_t)\|_p \]
for $p\in [2,\infty)$
as $t\rightarrow T$, where $g$ satisfies a suitable growth condition ensuring $g(X_T)\in L_p$. The behavior of $R^X_p(t)$ as $t\rightarrow T$ 
is a measure of the {fractional smoothness}  of $g$, see \cite{geis:gobe:11} for an overview. Actually it is now well-known 
\cite{geis:geis:04,geis:hujo:07,gobe:makh:10,geis:geis:gobe:12} that there is a precise correspondence between the irregularity of the 
terminal function $g$ and the time-singularity of 
the $L_p$-norms of $\nabla v(t,X_t)$ as $t\uparrow T$ where
\[ v(t,x)=\E (g(X_T) | X_t = x ). \]
The aim of this paper is to extend these quantitative equivalence results to situations where the $L_p$-norms are computed under different measures. 
The theory of probabilistic Muckenhoupt weights, developed as a counterpart to the 
deterministic ones from \cite{muck:72} and other papers, gives a natural way
to extend various martingale inequalities to equivalent measures, see  exemplary
\cite{izum:kaza:77,bona:lepi:79,kaza:94} and the references therein. 
A typical situation is a change of measure initiated by a 
Girsanov transformation, i.e. a change of the drift of $X$. Applying the results of this paper 
in this particular case, gives -without going into full details- the following:
if the process $Y$ differs from $X$ by another bounded drift and 
if $\theta\in(0,1)$, then we have
\begin{equation}
\label{eq:intro}
\sup_{t\in [0,T)}(T-t)^{-\theta/2} R^Y_p(t)<\infty
 \Longleftrightarrow  
\sup_{t\in [0,T)}(T-t)^{(1-\theta)/2}\|\nabla v(t,Y_t)\|_p<+\infty
\end{equation}
which follows from Theorem \ref{thm:equivalence} below for $q=\infty$ as explained in Remark \ref{remark:thm_equivalence}(7). The parameter 
$\theta$ is the degree of \emph{fractional smoothness}. 
\smallskip

Regarding the references in the literature related to \eqref{eq:intro}, a 1-dimensional diffusion case with $X=Y$ is considered 
in \cite{geis:geis:04}, the extension to multidimensional processes is performed in \cite{geis:hujo:07} 
in the case $X=Y$ being a Brownian motion and in \cite{gobe:makh:10} for diffusion processes. In \cite{geis:geis:gobe:12} path-dependent functionals are considered. For an overview the reader is referred
to \cite{geis:gobe:11}.
Actually our main result (Theorem \ref{thm:equivalence}) takes a more general form than \eqref{eq:intro}:
\begin{itemize}
\item we consider an $L_q([0,T),\frac{dt  }{T-t })$-norm with $q\in [2,\infty]$ instead of  the above 
      $L_\infty$-norm with respect to $t\in [0,T)$;
\item we consider an additional potential factor $k$ in our parabolic problem to define $v$;
\item the change of measure, described  in \eqref{eq:intro} by the change from $X$ to $Y$, 
      is described by Muckenhoupt weights;
\item we also state results regarding the second derivatives.
\end{itemize}

\paragraph{Applications.}\hspace*{-1em} The tight control of the behavior of the norms $\|\nabla v(t,X_t)\|_{L_2}$ as $t\rightarrow T$ is an issue that has been raised in \cite{geis:geis:04}, where the purpose was to analyze discrete approximations of stochastic integrals coming from the representation
\begin{equation}\label{eq:pred:rep}
g(X_T)=v(0,x_0)+\int_0^T \nabla v(t,X_t) \sigma(t,X_t) dB_t.
\end{equation}
Discretizing the above stochastic integral and analyzing the resulting approximation error in $L_2$,
requires a better understanding how strongly the irregularity of the terminal function $g$ transfers 
to the blow-up of the function $t \mapsto \|\nabla v(t,X_t)\|_{L_2}$ and higher derivatives of $v$ as well. 
Major consequences of this analysis are the derivation of tight convergence rates for uniform 
time grids and  the design of non-equidistant time grids to obtain optimal convergence rates.
\smallskip

Recently, similar results have been established in the context of Backward Stochastic Differential Equations 
\cite{gobe:makh:10,geis:geis:gobe:12} to pave the way for the development of more efficient numerical schemes.
\smallskip

Finally, similar issues arise in the analysis of the Delta-Gamma hedging strategies in Finance, 
which typically result in a higher order approximation of the stochastic integral \eqref{eq:pred:rep}, see \cite{gobe:makh:11}. 
\smallskip

Within the applications in Stochastic Finance
intrinsically two measures are involved: the historical measure for evaluating the risk, for example as
$L_p$-mean, and  
the risk-neutral measure, under which the price and the hedging strategy are computed and which is related 
to the above function $v$. 
For this setting, the current results are particularly of interest. Moreover, the potential $k$ may be interpreted as an interest rate.


\section{Setting}
\label{sec:setting}
\paragraph{Notation.} 
We denote by $|\cdot|$ the Euclidean norm of a vector.
Given a matrix $C$ considered as operator $C:\ell_2^n \to \ell_2^N$,
the expression $|C|$ stands for the Hilbert-Schmidt norm
and $C^\top$ for the transposed of $C$.
The $L_p$-norm ($p\in [1,\infty]$) of a random vector $Z:\Omega\to \R^n$ or
a random matrix $Z:\Omega\to\R^{n\times m}$ is denoted by $\|Z\|_p = \||Z|\|_{L_p}$.
As usual, $\partial^\alpha_x \varphi$ is the partial derivative of the 
order of an multi-index $\alpha$ (with length $|\alpha|$) with respect to $x\in \R^d$.
The Hessian matrix of a function $\varphi:\R^d\mapsto \R$ is abbreviated by $D^2 \varphi$
and the gradient (as row vector) by  $\nabla \varphi$. In particular, this means that 
$D^2$ and $\nabla$ always refer to the state variable $x\in\R^d$.
If we mention that a constant depends on $b$, $\sigma$ or $k$, then we implicitly indicate
a possible dependence on $T$ and $d$ as well. Finally, letting $h:[0,T]\times\R^d\to \R^{n\times m}$ we use
the notation $\| h\|_\infty := \sup_{t,x} |h(t,x)|$.

\paragraph{The parabolic PDE.} We fix $T>0$ and consider the Cauchy problem 
\equa 
\mathcal{L} v & = & 0 \sptext{3}{on}{.7} [0,T)\times\R^d, \\
     v(T,x)   & = & g(x)
\tion
with

\[      \mathcal{L}  
   :=  \partial_t 
      + \frac 12\sum_{i,j=1}^d a_{i,j}(t,x) \partial^{2}_{x_i,x_j}
      + \sum_{i=1}^db_i(t,x) \partial_{x_i} 
      + k(t,x), \]
where $A:=(a_{ij})_{ij}=\sigma\sigma^\top$. The assumptions on the coefficients and $g$ are as follows:

\begin{enumerate}[(C1)]
\item The functions $\sigma_{i,j},b_i,k$ are bounded and belong to  $C^{0,2}_b([0,T]\times \R^d)$ 
      and there is some $\gamma\in (0,1]$ such that the functions and their state-derivatives are
      $\gamma$-H\"older continuous with respect to the parabolic 
      metric on each compactum of $[0,T]\times\R^d$. Moreover, $\sigma$ is $1/2$-H\"older
      continuous in $t$ uniformly in $x$. \item $\sigma(t,x)$ is an invertible $d\times d$-matrix with $\sup_{t,x} |\sigma^{-1}(t,x)| < +\infty$;
\item the terminal function $g:\R^d\to \R$ is measurable and exponentially bounded: 
      for some $K_g\geq0$ and $\kappa_g\in[0,2)$ we have
      \[ |g(x)|\leq K_g\exp(K_g|x|^{\kappa_g}) \sptext{1}{for all}{1} x\in \R^d.\]
\end{enumerate}
The condition (C2) implies that there exists a $\delta>0$ with $\langle Ax,x \rangle \ge \delta |x|^2$
for all $x\in \R^d$, i.e. the operator $\mathcal{L}$ is uniformly parabolic.
Under the above assumptions there exists a fundamental solution:

\begin{proposition}[{\cite[Theorem 7, p. 260; Theorem 10, pp. 72-74]{frie:64}}]
\label{proposition:properties_Gamma}
Under the assumptions {\rm (C1)} and {\rm (C2)} there exists a fundamental solution 
$\Gamma(t,x;\tau,\xi): \{ 0\le t < \tau \le T\}\times \R^d\times \R^d\to [0,\infty)$
for $\mathcal{L}$ and a constant $c_{\eqref{eqn:theorem:properties_Gamma}}>0$ such that for $0\le |a|+2b\le 3$ the derivatives
$D^a_x D^b_t \Gamma$ exist in any order, are continuous, and satisfy
\begin{equation}\label{eqn:theorem:properties_Gamma}
             |D_x^{a} D_t^b \Gamma(t,x;\tau,\xi)|
         \le c_{\eqref{eqn:theorem:properties_Gamma}}
             (\tau-t)^{-\frac{|a|+2b}{2}} \gamma^d_{\tau-t} \kla 
             \frac{x-\xi}{ c_{\eqref{eqn:theorem:properties_Gamma}}} \mer
\end{equation}
where $\gamma_s^d(x) := e^{-\frac{|x|^2}{2s}}/(\sqrt{2\pi s})^d$.
\end{proposition}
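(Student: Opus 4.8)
\emph{Proof proposal.} The statement is the classical parametrix (Levi) construction for parabolic equations, and the plan is to reconstruct it; the details in the precise form quoted are in \cite{frie:64} at the pages cited. Fix the terminal pole $(\tau,\xi)$ and freeze the principal coefficients there: let $Z(t,x;\tau,\xi)$ be the fundamental solution of the constant-coefficient operator $\partial_t+\frac12\sum_{i,j}a_{ij}(\tau,\xi)\partial^2_{x_i x_j}$, which is the explicit Gaussian $Z(t,x;\tau,\xi)=(\det 2\pi(\tau-t)A(\tau,\xi))^{-1/2}\exp\big(-\langle A(\tau,\xi)^{-1}(x-\xi),x-\xi\rangle/(2(\tau-t))\big)$. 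By uniform parabolicity (from (C2)) and boundedness of $A$, the kernel $Z$ and all its $x$- and $t$-derivatives already satisfy bounds of the form \eqref{eqn:theorem:properties_Gamma} with $Z$ in place of $\Gamma$. One then seeks $\Gamma$ in the form
\[
  \Gamma(t,x;\tau,\xi) = Z(t,x;\tau,\xi) + \int_t^\tau\!\!\int_{\R^d} Z(t,x;s,y)\,\Phi(s,y;\tau,\xi)\,dy\,ds
\]
with an unknown kernel $\Phi$ to be determined so that $\mathcal{L}\Gamma=0$.

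Applying $\mathcal{L}$ in $(t,x)$ to the ansatz, and writing $\mathcal{L}Z(t,x;\tau,\xi)=(\mathcal{L}-\mathcal{L}^{(\tau,\xi)})Z(t,x;\tau,\xi)$ where $\mathcal{L}^{(\tau,\xi)}$ is the frozen operator (so $\mathcal{L}^{(\tau,\xi)}Z=0$), one is led to the Volterra equation
\[
  \Phi(t,x;\tau,\xi) = \Psi(t,x;\tau,\xi) + \int_t^\tau\!\!\int_{\R^d}\Psi(t,x;s,y)\,\Phi(s,y;\tau,\xi)\,dy\,ds,
  \qquad \Psi:=\mathcal{L}Z .
\]
The key point is that $\partial^2_{x_ix_j}Z$ carries the singularity $(\tau-t)^{-1}$, but it is multiplied by $a_{ij}(t,x)-a_{ij}(\tau,\xi)$, which by the $\gamma$-H\"older continuity in (C1) is of size $\lesssim(|x-\xi|+\sqrt{\tau-t})^{\gamma}$; absorbing the factor $|x-\xi|^{\gamma}$ into the Gaussian costs $(\tau-t)^{\gamma/2}$, so $|\Psi(t,x;\tau,\xi)|\lesssim(\tau-t)^{-1+\gamma/2}\gamma^d_{c(\tau-t)}(x-\xi)$, an integrable time singularity. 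Solving by the Neumann series $\Phi=\sum_{m\ge1}\Psi_m$ with $\Psi_1=\Psi$ and $\Psi_{m+1}(t,x;\tau,\xi)=\int_t^\tau\!\int_{\R^d}\Psi(t,x;s,y)\Psi_m(s,y;\tau,\xi)\,dy\,ds$, one uses the reproducing property $\int_{\R^d}\gamma^d_{s-t}(x-y)\gamma^d_{\tau-s}(y-\xi)\,dy=\gamma^d_{\tau-t}(x-\xi)$ together with the Euler Beta identity $\int_t^\tau(s-t)^{\alpha-1}(\tau-s)^{\beta-1}\,ds=(\tau-t)^{\alpha+\beta-1}\mathrm{B}(\alpha,\beta)$ to prove inductively $|\Psi_m|\lesssim C^m(\tau-t)^{-1+m\gamma/2}\gamma^d_{c(\tau-t)}(x-\xi)/\text{(Euler Gamma of }m\gamma/2\text{)}$; hence the series converges absolutely and locally uniformly, and summing gives $|\Phi|\lesssim(\tau-t)^{-1+\gamma/2}\gamma^d_{c(\tau-t)}(x-\xi)$, whence by one further convolution the bound \eqref{eqn:theorem:properties_Gamma} for $\Gamma$ in the case $a=b=0$.

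The derivative estimates follow by differentiating the ansatz. The $Z$-term satisfies \eqref{eqn:theorem:properties_Gamma} directly from the Gaussian form. In the correction term one differentiates $Z(t,x;s,y)$ in $(t,x)$ under the integral, which is licit since $s>t$; but $\partial^a_x Z$ contributes $(s-t)^{-|a|/2}$ near $s=t$, so for $|a|\ge 2$ (and for each $\partial_t$) one must first integrate by parts in $y$ to move a derivative onto $\Psi$, or split off a thin time layer and use the H\"older continuity of $\Psi$ (equivalently of the coefficients) to restore integrability. This is exactly where the hypothesis $C^{0,2}_b$ on $b,\sigma,k$, together with the $1/2$-H\"older continuity of $\sigma$ in $t$, is used, and it is precisely what is needed to carry the argument up to $|a|+2b=3$. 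Once the bounds are in hand, $\mathcal{L}\Gamma=0$ on $[0,T)\times\R^d$ and $\Gamma(t,\cdot\,;\tau,\cdot)\to\delta$ as $t\uparrow\tau$ follow by dominated convergence. Since (C1) only gives H\"older regularity on compacta, one first runs the construction for coefficients that are uniformly H\"older and coincide with the given ones on a large ball, and then passes to the limit using the uniform Gaussian bounds.

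The main obstacle in a fully self-contained proof is precisely the third-order estimates: the termwise-differentiated Volterra series is more singular, and both its convergence and the interchange of differentiation with summation require the finer H\"older estimates on $\Psi$ and its derivatives, not merely on $Z$. Everything else is routine bookkeeping with Gaussian convolutions and Beta integrals.
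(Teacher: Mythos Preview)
The paper does not give its own proof of this proposition: it is stated with a direct citation to Friedman's book and no argument is supplied. Your proposal is a correct outline of the classical Levi parametrix construction, which is exactly the method carried out at the cited pages of \cite{frie:64}; in that sense your approach coincides with what the paper implicitly invokes. The sketch is accurate in its main points (frozen-coefficient Gaussian, Volterra equation for $\Phi$, H\"older gain $(\tau-t)^{\gamma/2}$ from the coefficient increment, Neumann series with Beta/Gamma control, and the extra care needed for $|a|+2b\ge 2$). Your remark about localizing to obtain uniform H\"older constants, since (C1) only assumes H\"older regularity on compacta, is also appropriate.
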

\bigskip
For 
\equa
v(t,x) &:= & \int_{\R^d}\Gamma(t,x;T,\xi)g(\xi)d\xi,\\
v(T,x) &:= & g(x),
\tion
and $0\le |a|+2b\le 3$ Proposition \ref{proposition:properties_Gamma} implies that the derivatives 
$D_x^{a} D_t^b v$ exist in any order, are continuous on $[0,T)\times \R^d$  and satisfy 
\equa
\mathcal{L}v & = & 0 \sptext{1}{on}{1} [0,T)\times\R^d, \\
|D_x^{a} D_t^b v(t,x)|&\le& c (T-t)^{-\frac{|a|+2b}{2}} \exp(c |x|^{ \kappa_g})
\tion
for $x\in \R^d$ and $t\in[0,T)$, where $c>0$ depends at most on 
$(\kappa_g,K_g,c_\eqref{eqn:theorem:properties_Gamma},T)$.

\paragraph{The stochastic differential equation.} 
Let $(B_t)_{t\in [0,T]}$ be a $d$-dimensional standard Brownian motion
defined on $(\Omega,\cF,(\cF_t)_{t\in [0,T]},\Q)$, where $(\Omega,\cF,\Q)$ is complete,
$(\cF_t)_{t\in [0,T]}$ is right-continuous, $\cF=\cF_T$, 
$\cF_0$ is generated by the null sets of $\cF$ and where all local martingales are continuous.
\medskip

As we work on a closed time-interval we have to explain our understanding of a local martingale:
we require that the localizing sequence of stopping times $0\le \tau_1 \le \tau_2 \le \cdots \le T$ 
satisfies $\lim_n \Q(\tau_n=T)=1$. The reason for this is that we think about the extension of
the filtration constantly by $\cF_T$ to $(T,\infty)$ and that all local martingales
$(N_t)_{t\in [0,T]}$  (in our setting) are extended by $N_T$ to $(T,\infty)$. This yields the
standard notion of a local martingale.
However this is not needed explicitly in our paper, we only need this implicitly whenever
we refer to results about the Muckenhoupt weights $A_\alpha(\Q)$ from \cite{kaza:94}.
\medskip

To shorten the notation, we denote sometimes the conditional expectation $\E(.|\cF_t)$ by $\E^{\cF_t}(.)$.
The process $X=(X_t)_{t\in [0,T]}$ is given as strong unique solution of
\[ X_t  =  x_0 + \int_0^t \sigma(s,X_s) dB_s + \int_0^t b(s,X_s) ds. \]
Introducing the standing notation
\[ K_t^X := e^{\int_0^t k(r,X_r) dr}
   \sptext{1}{and}{1}
   M_t := K_t^X v(t,X_t), \]
It\^o's formula implies, for $t\in [0,T)$, that
\begin{eqnarray}
   M_t & = & v(0,x_0) + \int_0^t K_s^X \nabla v(s,X_s) \sigma(s,X_s) dB_s. 
\end{eqnarray}
Moreover, 
\begin{equation}\label{eqn:L_p-convergence_to_maturity}
   \lim_{t\to T} M_t = M_T 
   \sptext{1}{and}{1}
   \lim_{t\to T} v(t,X_t) = g(X_T)
\end{equation} 
almost surely and in any $L_r(\Q)$ with $r\in [1,\infty)$.
Using Proposition \ref{proposition:properties_Gamma} for $k=0$ we also have 
\[ \Q(|X_t-x_0|>\lambda) \leq c\exp\left (-\frac{\lambda^2}{c}\right ) \]
for all $\lambda\geq 0$ and  $t\in [0,T]$, where  $c>0$ depends at most on $(\sigma,b)$ and is,
in particular, independent from the starting value $x_0\in \R^d$. It directly implies 
that
\[ g(X_T) \in \bigcap_{r\in [1,\infty)} L_r(\Q) \]
so that Remark \ref{remark:L_p-inclusions} applies as well.
We will also use the following 

\begin{lemma}
[{\cite{gobe:muno:05}, 
  \cite[Proof of Lemma 1.1]{gobe:makh:10},
  \cite[Remark 3 in Appendix B]{geis:geis:gobe:12}}]
\label{lemma:Malliavin_weights}
Let $t\in (0,T]$, $h:\R^d\to \R$ be a Borel function satisfying {\rm (C3)} and
$\Gamma_X$ be the transition density of $X$, i.e. the function $\Gamma$ from
Proposition \ref{proposition:properties_Gamma} in the case $k=0$. Define
\[ H(s,x) := \int_{\R^d} \Gamma_X(s,x;t,\xi) h(\xi) d\xi
   \sptext{1}{for}{1}
   (s,x)\in [0,t)\times\R^d. \]
For $r\in [0,t)$ and $x\in \R^d$ let $(Z_u)_{u\in [r,t]}$ be the diffusion based on 
$(\sigma,b)$ starting in $x$ defined on some $(M,\cG,(\cG_u)_{u\in [r,t]},\mu)$ equipped with a 
standard $(\cG_u)_{u\in [r,t]}$-Brownian motion, where 
$(M,\cG,\mu)$ is complete, 
$(\cG_u)_{u\in [r,t]}$ is right-continuous and $\cG_r$ is generated by the null sets of $\cG$. 
Then, for $q\in (1,\infty)$ and $s\in [r,t)$, one has a.s. that
\equa
      |\nabla H(s,Z_s)| 
&\le& \kappa_q \frac{ [ \E( |h(Z_t)  - \E (h(Z_t)|\cG_s)|^q|\cG_s) 
      ]^\frac{1}{q}}{(t-s)^\frac{1}{2}}, \\
      |D^2 H(s,Z_s)| 
&\le& \kappa_q 
      \frac{[ \E( |h(Z_t)  - \E (h(Z_t)|\cG_s)|^q|\cG_s) ]^\frac{1}{q}}
           {t-s},
\tion
where $\kappa_q>0$ depends at most on $(\sigma,b,q)$.
\end{lemma}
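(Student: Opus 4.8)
The plan is to combine the Markov property with a Bismut--Elworthy--Li integration by parts formula, exploiting that the resulting Malliavin weights are stochastic integrals over $(s,t]$ and therefore have vanishing $\cG_s$-conditional expectation --- which is exactly what makes the \emph{centered} random variable $h(Z_t)-\E(h(Z_t)\mid\cG_s)$ appear at no extra cost. First I would record that, because $h$ satisfies {\rm (C3)} and the diffusion $Z$ enjoys the same Gaussian-type tail bound as $X$ (apply Proposition \ref{proposition:properties_Gamma} with $k=0$), one has $h(Z_t)\in\bigcap_{\rho\in[1,\infty)}L_\rho(\mu)$, so that every conditional $L_q$-expression below is finite and the integration by parts is legitimate. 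Since $H$ is defined through the transition density $\Gamma_X$, the Markov property gives $H(s,Z_s)=\E(h(Z_t)\mid\cG_s)$ a.s.; by Proposition \ref{proposition:properties_Gamma} the function $H$ is smooth on $[r,t)\times\R^d$, and conditionally on $\cG_s$ the process $(Z_u)_{u\in[s,t]}$ is again a diffusion based on $(\sigma,b)$ started at $Z_s$. Hence it suffices to bound $|\nabla_x\E_x h(Z_t)|$ and $|D^2_x\E_x h(Z_t)|$ at the random point $x=Z_s$, with constants independent of the starting point.

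For the gradient I would introduce the first variation process $(Y_u)_{u\in[s,t]}$ with $Y_s=I_d$, which solves a linear SDE with bounded coefficients by {\rm (C1)} and hence has moments of all orders bounded uniformly in the starting point. The Bismut--Elworthy--Li formula then furnishes a weight $\pi_{s,t}$ of the schematic form $\tfrac{1}{t-s}\int_s^t\sigma^{-1}(u,Z_u)Y_u\,dB_u$ --- well defined because $\|\sigma^{-1}\|_\infty<\infty$ by {\rm (C2)} --- such that
\[
\nabla H(s,x)=\E_x\!\left[h(Z_t)\,\pi_{s,t}\right].
\]
As $\pi_{s,t}$ is a stochastic integral over $(s,t]$ we have $\E(\pi_{s,t}\mid\cG_s)=0$, so subtracting the $\cG_s$-measurable constant $c:=\E(h(Z_t)\mid\cG_s)$ gives $\nabla H(s,Z_s)=\E[(h(Z_t)-c)\,\pi_{s,t}\mid\cG_s]$. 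The conditional Hölder inequality with exponents $q$ and $q/(q-1)$, together with the conditional Burkholder--Davis--Gundy inequality, the uniform moment bound on $Y$, and $\|\sigma^{-1}\|_\infty<\infty$, then yields
\[
|\nabla H(s,Z_s)|\le\left[\E\!\left(|h(Z_t)-\E(h(Z_t)\mid\cG_s)|^q\,\middle|\,\cG_s\right)\right]^{1/q}\,\frac{\kappa_q}{(t-s)^{1/2}},
\]
which is the first estimate.

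For the Hessian I would iterate the integration by parts on the two halves $[s,\tfrac{s+t}{2}]$ and $[\tfrac{s+t}{2},t]$: one application of the Bismut--Elworthy--Li formula on the second half (applied to the smooth function $H$), followed by one more on the first half, produces a second-order weight $\pi^{(2)}_{s,t}$ supported on $(s,t]$ and assembled from products of stochastic integrals, with $D^2 H(s,x)=\E_x[h(Z_t)\,\pi^{(2)}_{s,t}]$ and $\E(\pi^{(2)}_{s,t}\mid\cG_s)=0$. Splitting the interval this way keeps every step first-order, so that no weight is ever differentiated; each half then contributes a factor of order $(t-s)^{-1/2}$ to the conditional $L_{q/(q-1)}$-norm of $\pi^{(2)}_{s,t}$, whence a bound of order $(t-s)^{-1}$, where one also uses the uniform moment bounds for the second variation process (again furnished by {\rm (C1)}). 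Repeating the centering-plus-Hölder argument of the previous paragraph with $\pi^{(2)}_{s,t}$ in place of $\pi_{s,t}$ then gives the second estimate. I expect the only genuine work to lie in this last step --- writing down a concrete version of the second-order Malliavin weight and checking that its conditional $L_{q/(q-1)}$-norm really scales like $(t-s)^{-1}$ uniformly in the starting point, which is precisely why one splits $[s,t]$ in two so as to avoid differentiating a stochastic integral; the rest is essentially automatic, since the centering is free from $\E(\pi_{s,t}\mid\cG_s)=0$ and the uniform moment bounds on the variation processes follow directly from the boundedness in {\rm (C1)} and the uniform ellipticity in {\rm (C2)}.
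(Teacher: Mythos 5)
The paper does not prove this lemma at all: it is quoted from the cited references (Gobet--Munos, Gobet--Makhlouf, Geiss--Geiss--Gobet), whose proofs use exactly the strategy you outline --- Malliavin/Bismut--Elworthy--Li weights with zero conditional expectation, so that the centered quantity $h(Z_t)-\E(h(Z_t)\mid\cG_s)$ comes for free, followed by conditional H\"older. Your treatment of the gradient bound is correct and is essentially the argument in those references: the weight $\pi_{s,t}=\frac{1}{t-s}\int_s^t(\sigma^{-1}(u,Z_u)Y_u)^\top dB_u$ is an It\^o integral of an adapted integrand with uniformly bounded moments, hence $\E(\pi_{s,t}\mid\cG_s)=0$ and $\bigl(\E(|\pi_{s,t}|^{q/(q-1)}\mid\cG_s)\bigr)^{(q-1)/q}\le \kappa (t-s)^{-1/2}$ by conditional BDG.

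The Hessian step, however, contains a genuine gap precisely at the point you flag as the only real work. Your claim is that splitting $[s,t]$ at $m=\frac{s+t}{2}$ and applying plain BEL once on each half yields $D^2H(s,x)=\E_x[h(Z_t)\,\pi^{(2)}_{s,t}]$ with $\pi^{(2)}$ a product of the two first-order weights, ``so that no weight is ever differentiated.'' This identity is false for state-dependent $(\sigma,b)$. Indeed, the tower property gives $\E_x[h(Z_t)\,\pi_{m,t}\otimes\pi_{s,m}]=\E_x[\nabla H(m,Z_m)\otimes\pi_{s,m}]=\nabla_x\E_x[\nabla H(m,Z_m)]$, whereas the object you need is $D^2H(s,x)=\nabla_x\E_x[\nabla H(m,Z_m)\,Y_{s,m}]$, because $\nabla_x H(s,x)=\E_x[\nabla H(m,Z_m)\,Y_{s,m}]$ by the chain rule; the two differ unless the first variation $Y_{s,m}$ is the identity (constant coefficients). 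Consequently the second integration by parts cannot be a plain BEL applied to a function of $Z_m$ alone: it acts on a path functional through $Y_{s,m}$ (equivalently, writing $\nabla_xH(s,x)=\E_x[H(m,Z_m)\pi_{s,m}]$ and differentiating produces the extra term $\E_x[H(m,Z_m)\,\nabla_x\pi_{s,m}]$). The correct second-order weight therefore involves the derivative of the first variation (second variation) or iterated Skorokhod integrals, as constructed in Gobet--Munos. The good news is that the repair preserves your two key structural points: the extra term $\nabla_x\pi_{s,m}$ is again an It\^o integral of adapted integrands with bounded moments, so it has zero $\cG_s$-conditional mean (centering survives) and conditional $L_{q/(q-1)}$-norm of order $(m-s)^{-1/2}$, while the main term $\E_x[\nabla H(m,Z_m)Y_{s,m}\otimes\pi_{s,m}]$ is handled by your second-half bound on $\nabla H(m,\cdot)$ together with conditional Jensen to pass from centering at time $m$ to centering at time $s$; summing gives the $(t-s)^{-1}$ rate. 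So the lemma and the overall plan are sound, but the step ``product of two BEL weights, nothing differentiated'' must be replaced by the genuine second-order weight (or by the alternative elementary route via Friedman's Gaussian bounds on $D_x^2\Gamma_X$ combined with $\int_{\R^d}D_x^a\Gamma_X(s,x;t,\xi)\,d\xi=0$ for $|a|\ge 1$, which is closer to the argument indicated in Geiss--Geiss--Gobet).
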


\paragraph{Conditions on the equivalent measure.} 
In addition to the given measure $\Q$ we will use an equivalent measure $\P\sim\Q$ and agree about
the following standing assumption: 
\begin{enumerate}
\item [(P)]
      There exists a martingale $Y=(Y_t)_{t\in [0,T]}$ with $Y_0\equiv 0$ such that
      \[ \lambda_t := \mathcal{E}(Y)_t = e^{Y_t-\frac{1}{2}\langle Y \rangle_t} 
          \sptext{1}{for}{1}
          t\in [0,T] \]
      is a martingale and
      \[ d\P=\lambda_T d\Q.\]
\end{enumerate}

\begin{definition}
Assume that condition {\rm (P)} is satisfied.
\begin{enumerate}[{\rm (i)}]
\item For $\alpha\in (1,\infty)$ we say that $\lambda_T\in A_\alpha(\Q)$ provided that there is a 
      constant $c>0$ such that for all stopping times $\tau:\Omega\to [0,T]$ one has that
      \[ \E_\Q \kla \bet \frac{\lambda_\tau}{\lambda_T}\rag^\frac{1}{\alpha-1} | \cF_\tau \mer 
         \le c \quad\mbox{ a.s.} \]
\item For $\beta\in (1,\infty)$ we let  $\lambda_T\in \rh_\beta(\Q)$ provided that there is a 
      constant $c>0$ such that for all stopping times $\tau:\Omega\to [0,T]$ one has that
      \[     \E_\Q (|\la_T|^\beta |\cF_\tau)^\frac{1}{\beta}
         \le c \la_\tau \quad\mbox{ a.s.} \]
\end{enumerate}
\end{definition}
\smallskip
The class $A_\alpha(\Q)$ is the probabilistic variant of the Muckenhoupt condition and 
$\rh$ stands for {\em reverse H\"older inequality}. Next we need
\smallskip

\begin{definition}
A martingale $Z=(Z_t)_{t\in [0,T]}$ is called \bmo-martingale provided that
$Z_0\equiv 0$ and there is a $c>0$ such that for all stopping 
times $\tau:\Omega\to [0,T]$ one has that
\[ \E_\Q \kla |Z_T - Z_\tau|^2 |\cF_\tau \mer \le c^2\quad\mbox{ a.s.} \]
\end{definition}
\smallskip

It is known \cite[Theorems 2.3]{kaza:94} that 
$(e^{Z_t - \frac{1}{2} \langle Z \rangle_t})_{t\in [0,T]}$ is a martingale
provided that $Z$ is a \bmo-martingale.
\smallskip

\begin{proposition}[{\cite[Theorems 2.4 and 3.4]{kaza:94}}]
\label{proposition:RH-A-BMO}
Under condition {\rm (P)} the following assertions are equivalent:
\begin{enumerate}[{\rm (i)}]
\item $Y$ is a {\rm BMO}-martingale.
\item $\mathcal{E}(Y)\in A_\alpha(\Q)$ for some $\alpha\in (1,\infty)$.
\item $\mathcal{E}(Y)\in \rh_\beta(\Q)$ for some $\beta\in (1,\infty)$.
\end{enumerate}
\end{proposition}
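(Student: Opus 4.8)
The statement is Kazamaki's characterisation of $\bmo$-martingales via the probabilistic Muckenhoupt and reverse H\"older conditions, and in the paper it is simply imported from \cite{kaza:94}; I sketch the structure one would otherwise reconstruct. Two workhorses are used throughout. First, the stochastic exponential identities: for a stopping time $\tau$ and $Y^\tau:=Y-Y_{\,\cdot\wedge\tau}$ one has $\mathcal{E}(Y)_T/\mathcal{E}(Y)_\tau=\mathcal{E}(Y^\tau)_T$, and, writing $D_\tau:=\langle Y\rangle_T-\langle Y\rangle_\tau$, $\mathcal{E}(Y^\tau)_T^{\gamma}=\mathcal{E}(\gamma Y^\tau)_T\exp\!\big(\tfrac{\gamma^2-\gamma}{2}D_\tau\big)$ for every $\gamma\in\R$, where $\mathcal{E}(cY^\tau)$ is a nonnegative supermartingale (so $\E_\Q(\mathcal{E}(cY^\tau)_T\mid\cF_\tau)\le1$) and is a true martingale when $Y\in\bmo$, by the criterion already quoted in the excerpt. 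Second, the John--Nirenberg (energy) inequality for continuous $\bmo$-martingales: there is a threshold $\lambda_0\sim\|Y\|_{\bmo(\Q)}^{-2}$ such that $\sup_\tau\big\|\E_\Q\big(e^{\lambda D_\tau}\mid\cF_\tau\big)\big\|_\infty<\infty$ for $0\le\lambda<\lambda_0$.

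\textbf{(i)$\Rightarrow$(ii) and (i)$\Rightarrow$(iii).} Starting from $(\lambda_\tau/\lambda_T)^{1/(\alpha-1)}=\mathcal{E}(\gamma Y^\tau)_T\exp\!\big(\tfrac{\gamma^2-\gamma}{2}D_\tau\big)$ with $\gamma=-1/(\alpha-1)$ (so that $\gamma^2-\gamma=\alpha/(\alpha-1)^2\to0$ as $\alpha\to\infty$), one estimates the conditional expectation by combining the martingale property of the $\mathcal{E}(\mu Y^\tau)$-factors (equivalently, passing to the measure $\mathcal{E}(\mu Y)_T\,d\Q$ and using the energy inequality there) with the energy inequality on the exponential factor; for $\alpha$ large all exponents of $D_\tau$ that are produced stay below $\lambda_0$, giving $\sup_\tau\|\E_\Q((\lambda_\tau/\lambda_T)^{1/(\alpha-1)}\mid\cF_\tau)\|_\infty<\infty$, i.e.\ (ii). Taking instead $\gamma=\beta>1$ with $\beta\downarrow1$ (so $\gamma^2-\gamma\downarrow0$), an analogous but more delicate bookkeeping of the H\"older exponents and auxiliary measures gives $\sup_\tau\|\E_\Q((\lambda_T/\lambda_\tau)^{\beta}\mid\cF_\tau)\|_\infty<\infty$ for $\beta$ close to $1$, which after the $\beta$-th root is (iii). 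The point where work is concentrated is keeping every exponential moment of $D_\tau$ subcritical; this is carried out in \cite{kaza:94}.

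\textbf{(ii)$\Rightarrow$(i).} This direction is short. Since $Y$ is a $\Q$-martingale, $\E_\Q(Y_T-Y_\tau\mid\cF_\tau)=0$, and $D_\tau\ge0$, so the conditional Jensen inequality applied to $(\lambda_\tau/\lambda_T)^{1/(\alpha-1)}=\exp\!\big(-\tfrac{1}{\alpha-1}(Y_T-Y_\tau)+\tfrac{1}{2(\alpha-1)}D_\tau\big)$ gives
\[ c\ \ge\ \E_\Q\big((\lambda_\tau/\lambda_T)^{1/(\alpha-1)}\mid\cF_\tau\big)\ \ge\ \exp\!\Big(\tfrac{1}{2(\alpha-1)}\,\E_\Q(D_\tau\mid\cF_\tau)\Big), \]
hence $\sup_\tau\|\E_\Q(D_\tau\mid\cF_\tau)\|_\infty\le2(\alpha-1)\log c<\infty$; for a continuous martingale $\E_\Q((Y_T-Y_\tau)^2\mid\cF_\tau)=\E_\Q(D_\tau\mid\cF_\tau)$, which is precisely $Y\in\bmo(\Q)$. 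This settles (i)$\Leftrightarrow$(ii). For (iii)$\Rightarrow$(i), the cleanest route is to observe that the reverse H\"older inequality gives, by H\"older, the ``$A_\infty$-type'' bound $\P(E\mid\cF_\tau)\le c\,\Q(E\mid\cF_\tau)^{1/\beta'}$ ($\tfrac1\beta+\tfrac1{\beta'}=1$) for all $E\in\cF_T$ and all stopping times $\tau$, and this self-improves (a Coifman--Fefferman-type argument, using the continuity of the filtration) to $\mathcal{E}(Y)\in A_\alpha(\Q)$ for some $\alpha$, whence $Y\in\bmo(\Q)$ by the previous step. (Equivalently one may use the duality $\lambda_T\in\rh_\beta(\Q)\Leftrightarrow1/\lambda_T\in A_{\beta'}(\P)$, obtained from the change-of-measure formula for conditional expectations, apply (i)$\Leftrightarrow$(ii) under $\P$ to the $\P$-martingale $-\tilde Y$ with $\tilde Y_t:=Y_t-\langle Y\rangle_t$, and transfer back via the stability of $\bmo$ under the Girsanov change $Y\mapsto\tilde Y$.) This closes (i)$\Leftrightarrow$(iii).

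\textbf{Main obstacle.} The quantitative heart is the John--Nirenberg/energy inequality for continuous $\bmo$-martingales, the careful choice of H\"older exponents and auxiliary measures so that only subcritical exponential moments of $D_\tau=\langle Y\rangle_T-\langle Y\rangle_\tau$ are ever invoked, and the self-improvement of the $A_\infty$-type estimate (or, alternatively, the invariance of $\bmo$ under the Girsanov transform). Recurring technical points: the exponentials $\mathcal{E}(cY^\tau)$ must be handled as supermartingales unless the $\bmo$-hypothesis is available to upgrade them to martingales, and the conditional Jensen/H\"older steps are legitimate only because the relevant exponentials are conditionally integrable by the respective hypotheses. None of this is re-derived in the present paper.
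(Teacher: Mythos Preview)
The paper gives no proof of this proposition; it is quoted verbatim from \cite[Theorems 2.4 and 3.4]{kaza:94}, as you yourself note at the outset. Your sketch follows the standard Kazamaki route (stochastic exponential identities, the John--Nirenberg/energy inequality for continuous $\bmo$-martingales, conditional Jensen for the easy direction, and duality/self-improvement for $\rh_\beta\Rightarrow\bmo$), which is precisely what the cited reference does, so there is nothing to compare.
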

\smallskip

\begin{remark}\label{remark:L_p-inclusions}
Under the assertions of Proposition \ref{proposition:RH-A-BMO}
we have that $\lambda_T\in L_\beta(\Q)$ and $1/\lambda_T\in L_{\alpha'}(\P)$ with
$1=(1/\alpha) + (1/\alpha')$ so that
\[ \bigcap_{r\in [1,\infty)} L_r(\Q) = \bigcap_{r\in [1,\infty)} L_r(\P). \]
\end{remark}
\smallskip

\begin{proposition}[{\cite[Theorems 2.3 and 3.19]{kaza:94}}]
\label{proposition:BMO-exponential}
Let $Y$ be a \bmo-martin\-gale  so that {\rm (P)} is satisfied.
For all $p\in (0,\infty)$ there is a $b_p(\P)>0$ such that for all $\Q$-martingales $N$ with $N_0\equiv 0$ one has that
\[     \frac{1}{b_p(\P)} \| N_T^* \|_{L_p(\P)}
   \le \| \sqrt{\langle N \rangle_T} \|_{L_p(\P)}
   \le b_p(\P) \| N_T^* \|_{L_p(\P)} \]
where $N^*_t:=\sup_{s \in [0,t]}|N_s|$.
\end{proposition}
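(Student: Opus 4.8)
\emph{Proof proposal.} The asserted inequality is the Burkholder--Davis--Gundy equivalence of $N^*_T$ and $\sqrt{\langle N\rangle_T}$ measured in $L_p(\P)$, as recorded in \cite[Theorems 2.3 and 3.19]{kaza:94}, and the plan is to recover it by the good-$\la$ method. The first point is what \emph{not} to do: although the classical unweighted BDG inequality holds under $\Q$ for every exponent in $(0,\infty)$, one cannot deduce the claim simply by inserting the density $\la_T$ and invoking Hölder's inequality together with the integrability $\la_T\in L_\beta(\Q)$, $1/\la_T\in L_{\alpha'}(\P)$ from Remark \ref{remark:L_p-inclusions}, because the $\Q$-exponents so produced on the two sides would not match. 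Instead the plan is to run the good-$\la$ scheme directly under $\P$, transporting the probabilistic estimates from $\Q$ to $\P$ by a reverse Hölder inequality: since $Y$ is a $\bmo$-martingale, Proposition \ref{proposition:RH-A-BMO} supplies a $\beta\in(1,\infty)$ with $\la_T\in\rh_\beta(\Q)$, and I would fix this $\beta$ and its conjugate exponent $\beta'$.

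The first real step would be to establish, \emph{under $\Q$}, conditional good-$\la$ inequalities. For $\la>0$ set $T_\la:=\inf\{t\in[0,T]:N^*_t>\la\}$ and $S_\la:=\inf\{t\in[0,T]:\sqrt{\langle N\rangle_t}>\la\}$ (with $\inf\emptyset:=T$). As every $\Q$-martingale is continuous in our setting, $N^*$ and $\langle N\rangle$ are continuous, so $\{N^*_T>\la\}=\{T_\la<T\}$ with $|N_{T_\la}|\le N^*_{T_\la}=\la$ there, and $\{\sqrt{\langle N\rangle_T}>\la\}=\{S_\la<T\}$ with $\langle N\rangle_{S_\la}=\la^2$ there. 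Applying Doob's $L_2$-inequality to $N-N_{T_\la}$ stopped when its quadratic-variation increment first reaches $\delta^2\la^2$ should give, on $\{T_\la<T\}$,
\[
  \Q\kla N^*_T>2\la,\ \sqrt{\langle N\rangle_T}\le\delta\la\ \big|\ \cF_{T_\la}\mer\le c\,\delta^2,
\]
while, using that on $\{N^*_T\le\delta\la\}$ the increment $N-N_{S_\la}$ stays bounded by $2\delta\la$, so that optional sampling of the bounded martingale $(N_t-N_{S_\la})^2-(\langle N\rangle_t-\langle N\rangle_{S_\la})$ keeps the conditional expectation of $\langle N\rangle_T-\langle N\rangle_{S_\la}$ at most $4\delta^2\la^2$, one should get, on $\{S_\la<T\}$,
\[
  \Q\kla \sqrt{\langle N\rangle_T}>2\la,\ N^*_T\le\delta\la\ \big|\ \cF_{S_\la}\mer\le c\,\delta^2,
\]
with an absolute constant $c$ in both lines.

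Next I would transport these to $\P$. With $E:=\{N^*_T>2\la,\ \sqrt{\langle N\rangle_T}\le\delta\la\}\subseteq\{T_\la<T\}$ and $\la$ a uniformly integrable $\Q$-martingale, I would write $\P(E)=\E_\Q\big[\E_\Q[\la_T\mathbbm{1}_E\mid\cF_{T_\la}]\big]$, bound the inner conditional expectation by the conditional Hölder inequality, then $\rh_\beta(\Q)$, then the first display of the previous paragraph, to obtain $\E_\Q[\la_T\mathbbm{1}_E\mid\cF_{T_\la}]\le c\,\la_{T_\la}\,\delta^{2/\beta'}$ on $\{T_\la<T\}$, and then use $\E_\Q[\mathbbm{1}_{\{T_\la<T\}}\la_{T_\la}]=\E_\Q[\mathbbm{1}_{\{T_\la<T\}}\la_T]=\P(N^*_T>\la)$ to arrive at the genuine good-$\la$ inequality
\[
  \P\kla N^*_T>2\la,\ \sqrt{\langle N\rangle_T}\le\delta\la\mer\le c'\,\delta^{2/\beta'}\,\P(N^*_T>\la),
\]
and, in the same way with $S_\la$ in the role of $T_\la$, $\P(\sqrt{\langle N\rangle_T}>2\la,\ N^*_T\le\delta\la)\le c'\,\delta^{2/\beta'}\,\P(\sqrt{\langle N\rangle_T}>\la)$, where $c'$ depends only on the $\rh_\beta(\Q)$-constant. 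Integrating each of these against $p\la^{p-1}\,d\la$, choosing $\delta$ with $2^pc'\delta^{2/\beta'}<\tfrac12$, and absorbing, yields both halves of the claim with $b_p(\P)$ depending only on $p$ and the $\rh_\beta(\Q)$-constant, hence ultimately on $\|Y\|_{\bmo}$; to license the absorption step I would first run everything for the truncated martingale $N^{\tau_m}$ with $\tau_m:=\inf\{t:N^*_t\ge m\ \text{or}\ \langle N\rangle_t\ge m^2\}$, where all quantities are finite and the constants are $m$-independent, and then pass $m\to\infty$ by monotone convergence under $\P$.

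The hard part will be the conditional good-$\la$ inequalities of the second paragraph: it is essential that they be produced in the localized, $\cF_{T_\la}$- resp. $\cF_{S_\la}$-conditional form, since only this matches the pointwise $\rh_\beta(\Q)$-estimate that makes the passage from $\Q$ to $\P$ work; arranging the stopping so that optional sampling legitimately turns a quadratic-variation increment into the second moment of a \emph{bounded} martingale, while handling the path continuity carefully, is the delicate point, whereas the unweighted $\Q$-BDG input, the good-$\la$ integration, and the truncation are routine.
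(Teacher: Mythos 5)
Your proposal is correct, but note that the paper does not prove this proposition at all: it is imported verbatim from Kazamaki (Theorems 2.3 and 3.19), so there is no in-paper argument to compare against. Your weighted good-$\la$ scheme --- conditional good-$\la$ estimates under $\Q$ at the stopping times $T_\la$, $S_\la$, transfer to $\P$ via the conditional H\"older inequality and the $\rh_\beta(\Q)$ bound at those stopping times, then integration against $p\la^{p-1}d\la$ with a truncation to justify absorption --- is precisely the standard route by which this weighted BDG inequality is established in the cited literature (Bonami--L\'epingle, Izumisawa--Kazamaki, Kazamaki), and the sketch, including the delicate stopping constructions that make the optional-sampling steps legitimate, is sound.
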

\smallskip

\paragraph{An inequality.} Given a probability space $(M,\Sigma,\mu)$ with a sub-$\sigma$ algebra
$\cG\subseteq\Sigma$ and $Z\in L_p(M,\Sigma,\mu)$ with
$p\in [1,\infty]$ we have that
\begin{equation}
\label{eq:lp}
     \frac{1}{2}\,\|Z-\E(Z|\cG)\|_{p} 
\leq \inf_{Z'\in L_p(M,\cG,\mu)} \|Z-Z'\|_{p}
\leq \|Z-\E(Z|\cG)\|_{p}.
\end{equation}


\section{The result}

In the following $\theta\in (0,1]$ will be the main parameter of the fractional 
smoothness. Additionally, we introduce a fine-tuning parameter $q\in [2,\infty]$ 
and 
\[ \Phi_q (h) := \| h \|_{L_q\kla [0,T), \frac{dt}{T-t} \mer} \]
for a measurable function $h:[0,T)\to \R$.
The aim of this paper is to prove the following result:
\medskip

\begin{theorem}\label{thm:equivalence}
Let $p\in [2,\infty)$, $\alpha\in (1,p)$ and $\lambda_T\in A_\alpha(\Q)$, and 
assume that {\rm (C1)}, {\rm (C2)} and {\rm (P)} are satisfied. Then, for 
$\theta\in(0,1)$, $q\in [2,\infty]$, a measurable function $g:\R^d\to\R$ 
satisfying {\rm (C3)} and $d\P=\lambda_T d\Q$ the following assertions are equivalent:
\begin{enumerate}[{\rm (i$_{\theta}$)}]
\item $\Phi_q\kla (T-t)^{-\frac{\theta}{2}} \| g(X_T) - \E^{\cF_t}_\P g(X_T)\|_{L_p(\P)} \mer < +\infty$.
\item $\Phi_q\kla (T-t)^\frac{1-\theta}{2}  \| \nabla v(t,X_t) \|_{L_p(\P)}         \mer < +\infty$.
\item $\Phi_q\kla (T-t)^\frac{2-\theta}{2}  \| D^2 v(t,X_t) \|_{L_p(\P)}              \mer < +\infty$.
\end{enumerate}
\end{theorem}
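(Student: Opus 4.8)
\emph{Plan.} I would prove the cycle (i$_\theta$)$\Rightarrow$(ii$_\theta$)$\Rightarrow$(iii$_\theta$)$\Rightarrow$(ii$_\theta$)$\Rightarrow$(i$_\theta$), writing $\psi(t)=\|g(X_T)-\E^{\cF_t}_\P g(X_T)\|_{L_p(\P)}$, $\phi(t)=\|\nabla v(t,X_t)\|_{L_p(\P)}$, $\chi(t)=\|D^2v(t,X_t)\|_{L_p(\P)}$. A first reduction removes the potential: if $w$ solves the Cauchy problem with $k\equiv0$, Duhamel's formula $v-w=\int_\cdot^T P^0_{\cdot,s}[k\,v(s,\cdot)]\,ds$ ($P^0$ the $k\equiv0$ semigroup) and the Gaussian bounds \eqref{eqn:theorem:properties_Gamma}, with $\kappa_g<2$, give $|\nabla(v-w)(t,x)|\le c(T-t)^{1/2}e^{c|x|^{\kappa_g}}$ and, since one derivative of $P^0$ may land on $k\,v(s,\cdot)\in C^1$ (leaving the convergent integral $\int_t^T(s-t)^{-1/2}(T-s)^{-1/2}\,ds$), $|D^2(v-w)(t,x)|\le c\,e^{c|x|^{\kappa_g}}$; using the Gaussian tails of $X_t$ under $\Q$ and $L_r(\Q)=L_r(\P)$ (Remark \ref{remark:L_p-inclusions}), the corresponding weighted norms have finite $\Phi_q$-norm ($1-\theta/2>0$), so I may assume $k\equiv0$, whereupon $(v(t,X_t))_{t<T}$ is the $\Q$-martingale $M$ with $M_T=g(X_T)$ and $dM_t=\nabla v(t,X_t)\sigma(t,X_t)\,dB_t$. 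Two facts are used throughout: Proposition \ref{proposition:BMO-exponential} (legitimate since $\lambda_T\in A_\alpha(\Q)$ makes $Y$ a \bmo-martingale, by Proposition \ref{proposition:RH-A-BMO}); and, with $\varrho:=p/\alpha\in(1,p)$, the change-of-measure bound $\|[\E_\Q(|W|^{\varrho}\mid\cF_s)]^{1/\varrho}\|_{L_p(\P)}\le c(\alpha,p)\|W\|_{L_p(\P)}$ for every $\cF$-measurable $W\in L_p(\P)$ and every $s$, which follows from testing $A_\alpha(\Q)$ at $\tau\equiv s$ (yielding $\E_\Q(\lambda_T^{-1/(\alpha-1)}\mid\cF_s)\le c\,\lambda_s^{-1/(\alpha-1)}$), conditional H\"older on $|W|^{\varrho}=(|W|^{\varrho}\lambda_T^{\varrho/p})\lambda_T^{-\varrho/p}$ with exponents $p/\varrho$ and $p/(p-\varrho)$ — the choice $\varrho=p/\alpha$ being exactly what makes $\varrho/(p-\varrho)=1/(\alpha-1)$ — and $\E_\Q(\lambda_T\mid\cF_s)=\lambda_s$.

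\emph{Differentiation implications.} For (i$_\theta$)$\Rightarrow$(ii$_\theta$) I apply Lemma \ref{lemma:Malliavin_weights} with $h=g$, final time $T$ and $X$ itself; replacing the inner $\Q$-conditional expectation by the $\cF_s$-measurable $\E^{\cF_s}_\P g(X_T)$ at the cost of a factor $2$ (conditional form of \eqref{eq:lp}), taking $L_p(\P)$-norms and using the bound above with $W=g(X_T)-\E^{\cF_s}_\P g(X_T)$ gives $\phi(s)\le c(T-s)^{-1/2}\psi(s)$, i.e.\ $(T-s)^{(1-\theta)/2}\phi(s)\le c(T-s)^{-\theta/2}\psi(s)$. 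For (ii$_\theta$)$\Rightarrow$(iii$_\theta$) I repeat this one derivative higher: for $s<T$ set $t=(s+T)/2$ (so $t-s=T-t$) and $H(s,\cdot)=P^0_{s,t}[\nabla v(t,\cdot)]$; from $v(s,\cdot)=P^0_{s,t}[v(t,\cdot)]$ and the $O((t-s)^{1/2})$-deviation of the first/second variation flows of $X$ from $I$/$0$ in every $L_r$ one gets $D^2v(s,\cdot)=\nabla H(s,\cdot)+R(s,\cdot)$ with $|R(s,x)|\le c(T-s)^{-1/2}e^{c|x|^{\kappa_g}}$ (again of finite weighted $\Phi_q$-norm), while Lemma \ref{lemma:Malliavin_weights} applied to $H$ (followed by the conditional \eqref{eq:lp} with competitor $0$ and the change-of-measure bound with $W=\nabla v(t,X_t)$) gives $\|\nabla H(s,X_s)\|_{L_p(\P)}\le c(t-s)^{-1/2}\phi(t)$. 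Hence $(T-s)^{(2-\theta)/2}\|\nabla H(s,X_s)\|_{L_p(\P)}\le c(T-t)^{(1-\theta)/2}\phi(t)$ with $t=(s+T)/2$, and since $s\mapsto t$ preserves $dt/(T-t)$ up to a constant, (ii$_\theta$) gives (iii$_\theta$).

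\emph{Integration implications.} For (ii$_\theta$)$\Rightarrow$(i$_\theta$): by \eqref{eq:lp} with competitor $M_t$, $\psi(t)\le 2\|M_T-M_t\|_{L_p(\P)}$, and Proposition \ref{proposition:BMO-exponential} on $M-M_{\cdot\wedge t}$ together with Minkowski's inequality in $L_{p/2}$ give $\|M_T-M_t\|_{L_p(\P)}\le c(\int_t^T\phi(s)^2\,ds)^{1/2}$; writing $\phi(s)=(T-s)^{-(1-\theta)/2}\eta(s)$, the claim reduces to boundedness of $\eta^2\mapsto(T-t)^{-\theta}\int_t^T(T-s)^{\theta-1}\eta(s)^2\,ds$ on $L_{q/2}([0,T),\tfrac{dt}{T-t})$, which after the substitution $a=T-t,\tau=T-s$ is a Schur-test Hardy inequality valid because $\theta>0$. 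For (iii$_\theta$)$\Rightarrow$(ii$_\theta$): It\^o's formula for $\nabla v(\cdot,X_\cdot)$ (from the spatial derivative of the equation for $v$) gives $\nabla v(t,X_t)=\nabla v(0,x_0)+\int_0^t D^2v\,\sigma\,dB+\int_0^t\Psi\,dr$ with $|\Psi(r,x)|\le c(|D^2v(r,x)|+|\nabla v(r,x)|)$; Proposition \ref{proposition:BMO-exponential}, Minkowski and Gr\"onwall's lemma yield $\phi(t)\le C(1+(\int_0^t\chi^2\,dr)^{1/2}+\int_0^t\chi\,dr)$, and under (iii$_\theta$) one checks $\sup_{t<T}\int_0^t\chi\,dr<\infty$ (the relevant exponent is $-1+(\theta/2)q'>-1$) while $f\mapsto(T-t)^{1-\theta}\int_0^t f(r)\,dr$ is bounded from $L_{q/2}([0,T),(T-r)^{2-\theta}\tfrac{dr}{T-r})$ to $L_{q/2}([0,T),\tfrac{dt}{T-t})$ — again Schur-test Hardy, valid because $\theta<1$ — which disposes of the $(\int_0^t\chi^2\,dr)^{1/2}$ term. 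This closes the cycle and gives the equivalence.

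\emph{Main obstacle.} I expect the crux to be making the change of measure interact with the pointwise Malliavin bounds: the auxiliary exponent must be taken to be $\varrho=p/\alpha$ so that the conditional moment produced by Lemma \ref{lemma:Malliavin_weights} can be traded against $\lambda_T$ exactly at the threshold where $A_\alpha(\Q)$ is available, and one must then recognise the resulting time-weighted estimates as precisely the two Hardy inequalities on $L_q([0,T),\tfrac{dt}{T-t})$ that hold for $0<\theta<1$ (so the full hypothesis $\theta\in(0,1)$ is used, one endpoint per direction). A more routine but still delicate point is the uniform control — in $t$, and in the fixed starting point that Lemma \ref{lemma:Malliavin_weights} forces — of all lower-order remainders (the potential $k$, the drift $\Psi$ of $\nabla v(\cdot,X_\cdot)$, the variation flows of $X$); among these the only genuinely tight estimate is the boundedness of $D^2(v-w)$, which rests on $\int_t^T(s-t)^{-1/2}(T-s)^{-1/2}\,ds<\infty$.
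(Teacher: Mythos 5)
Your argument is sound and reaches the theorem, but it is organised quite differently from the paper. The paper does not reduce to $k\equiv 0$ and does not run a cycle of implications with explicit Hardy inequalities; instead it verifies the four one-sided estimates $d^1\lesssim (T-t)^{-1/2}d^0$, $d^2\lesssim (T-t)^{-1}d^0$, $d^0\lesssim(\int_t^T[d^1]^2)^{1/2}$, $d^1\lesssim A+(\int_0^t[d^2]^2)^{1/2}$ for $d^0(t)=\sqrt{T-t}+\|M_T-\E_\P^{\cF_t}M_T\|_{L_p(\P)}$, $d^1=1+\|\nabla v(t,X_t)\|_{L_p(\P)}$, $d^2=1+\|D^2v(t,X_t)\|_{L_p(\P)}$ (Lemmas \ref{lemma:H1}, \ref{lemma:H2_new}, \ref{lemma:upper_bound_diffenerence_conditional_expectations}, \ref{lemma:upper_bound_nabla}), carrying the potential through via the auxiliary martingale $N_t=K^X_t\nabla v\nabla X_t+(\int_0^t\nabla k\,\nabla X\,ds)M_t$, the decomposition $v=v_h+\int v^t\,dt$ and Remark \ref{remark:thm_equivalence}(5), and then cites the abstract interpolation result Lemma \ref{lemma:general_interpol}, which hides exactly the two weighted Hardy inequalities you prove by Schur tests (one using $\theta>0$, the other $\theta<1$). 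Your core mechanisms coincide with the paper's: your change-of-measure bound with $\varrho=p/\alpha$ is the paper's Lemma \ref{lemma:1} in a slightly different packaging, your (i$_\theta$)$\Rightarrow$(ii$_\theta$) is Lemma \ref{lemma:H1} in the $k=0$ case, your It\^o-plus-Gr\"onwall step is Lemma \ref{lemma:upper_bound_nabla}, and your use of Proposition \ref{proposition:BMO-exponential} mirrors Lemma \ref{lemma:upper_bound_diffenerence_conditional_expectations}. What your route buys is a cleaner logical skeleton and a Hessian estimate of (ii)$\Rightarrow$(iii) type via the midpoint trick, rather than the paper's (i)$\Rightarrow$(iii) bound through the conditional oscillation of $g(X_T)$ (Lemmas \ref{lemma:m_M} and \ref{lemma:proportional_estimate}); what it costs is the Duhamel reduction, whose second-derivative bound $|D^2(v-w)|\le c\,e^{c|x|^{\kappa_g}}$ is the thinnest point of your sketch: the claim that "one derivative of $P^0$ lands on $k\,v(s,\cdot)$" is not literally contained in Proposition \ref{proposition:properties_Gamma} or Lemma \ref{lemma:Malliavin_weights} for a variable-coefficient semigroup, and should be justified, e.g., by combining the $D^2$-bound of Lemma \ref{lemma:Malliavin_weights} with the $\sqrt{s-t}$ estimate for the conditional oscillation of Lipschitz-in-space data (exactly the computation the paper performs in Lemma \ref{lemma:m_M}); with that supplied, and with the routine flow-moment controls you invoke for the midpoint remainder, your proof is complete and equivalent in strength to the paper's.
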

\medskip 

\begin{remark}
\label{remark:thm_equivalence}
\rm
\begin{enumerate}[(1)]
\item Using \cite[Corollary 3.3]{kaza:94} it is sufficient to require that $\lambda_T\in A_p(\Q)$ 
      as in this case there is an $\vare\in (0,p-1)$ such that $\lambda_T\in A_{p-\vare}(\Q)$.
      One the other hand, it would be of interest to investigate the case when 
       $\lambda_T\in A_\alpha(\Q)$ with $\alpha > p$. This is not done here.

\item Examples of functions $g$ such that $(i_\theta)$ is satisfied are 
      given for example in \cite{geis:geis:04,geis:hujo:07,geis:toiv:09,geis:geis:gobe:12}.

\item In the case $X=B$, $\P=\Q$, $T=1$ and $k=0$ the conditions of 
      Theorem \ref{thm:equivalence} (neglecting the boundedness condition (C3))
      are equivalent   to that $g$ belongs to the  Malliavin Besov space $B_{p,q}^\theta$ on 
      $\R^d$ weighted by the standard Gaussian measure (see \cite{geis:toiv:12}).

\item \underline{The case $\theta=1$ and $q\in [2,\infty)$} is not considered 
      in Theorem \ref{thm:equivalence} because it yields to pathologies:
      Let $X=B$, $\P=\Q$, $T=1$ and $k=0$. Condition 
      ${\rm (i_1)}$ implies ${\rm (ii_1)}$ by Lemma \ref{lemma:H1} below.
      Moreover, condition ${\rm (ii_1)}$ and the monotonicity of 
      $\| \nabla v(t,B_t) \|_{L_p(\P)}$ 
      ($(\nabla v(t,B_t))_{t\in [0,1)}$ is a martingale in this case)  imply that  
      that $\nabla v(t,B_t)=0$ a.s. so that $g(B_1)$ is almost surely constant
      (for example, one can use 
      $g(B_1) = \E (g(B_1)) + \int_{(0,1]} \nabla v(t,B_t) dB_t)$. 

\item As the process $M=(M_t)_{t\in [0,T]}$ with $M_t=K_t^Xv(t,X_t)$ is a martingale under $\Q$ 
      it is natural to consider condition {\rm (i$_{\theta}$)} for the corresponding martingale under
      $\P$ as well:
      \begin{enumerate}[{\rm (i$'_{\theta}$)}]
      \item $\Phi_q\kla (T-t)^{-\frac{\theta}{2}} \| M_T -\ept M_T \|_{L_p(\P)} \mer <  +\infty$.
      \end{enumerate}
      One can easily check that  (i$_{\theta}) \Longleftrightarrow$ (i$'_{\theta}$) for $\theta\in (0,1]$
      and $q\in [1,\infty]$:
      Indeed, for any random variables $U$ and $V$, respectively bounded and in $L_p(\P)$, observe that
      \equa
      &    & \|U V-\ept(UV)\|_{L_p(\P)} \\
      &\leq& \big\|[U-\ept U]V\big\|_{L_p(\P)}+\big\|\ept(U) [V-\ept V] \big \|_{L_p(\P)} \\
      &    & \hspace{10em} + \big \|\ept(U [\ept(V)-V]) \big \|_{L_p(\P)} \\
      &\leq& \|[U-\ept U]V\|_{L_p(\P)}+2\|U\|_\infty\| V-\ept V\|_{L_p(\P)}.
      \tion
      For $U:=e^{\int_0^T k(r,X_r) dr}$ and $V:=g(X_T)$ we have 
      \[ |U-\ept U|\leq 2  \|k\|_\infty (T-t)e^{\|k\|_\infty T} \]
      and obtain
      \equa
      &   & \|e^{\int_0^T k(r,X_r) dr} g(X_T)-\ept(e^{\int_0^T k(r,X_r) dr} g(X_T))\|_{L_p(\P)} \\
      &\le& 2 e^{\|k\|_\infty T}  \Big [\|k\|_\infty(T-t)\|g(X_T)\|_{L_p(\P)}+\| g(X_T)-\ept g(X_T)\|_{L_p(\P)}
            \Big ].
      \tion
      This proves (i$_{\theta}$)$\Longrightarrow$ (i$'_{\theta}$). The converse is proved similarly
      by letting $U:= e^{-\int_0^T k(r,X_r) dr}$ and 
      $V:=e^{\int_0^T k(r,X_r) dr} g(X_T)$. 
\item \underline{The case $\theta=1$ and $q=\infty$}. 
      \begin{enumerate}
      \item One has 
      $({\rm i}_1') \Longleftrightarrow ({\rm ii}_1) 
                   \Longrightarrow     ({\rm iii}_1)$:
             First we observe that 
             \begin{equation}\label{eqn:sqrt-1}
             \Phi_\infty \Big( (T-t)^{-\frac{1}{2}} \left ( \int_t^T h(s)^2 ds \right )^\frac{1}{2} 
             \Big) 
             \le \Phi_\infty (h).
             \end{equation}
            
             Then (ii$_1$)$\Longrightarrow$ (i$_1'$)
             follows from (\ref{eqn:sqrt-1}) with 
             $h(t)= \| \nabla v (t,X_t)\|_{L_p(\P)}$ and 
             Lemma \ref{lemma:upper_bound_diffenerence_conditional_expectations}.
             The implications (i$_1'$)$\Longrightarrow$ (ii$_1$)
             and 
             (i$_1$)$\Longrightarrow$ (iii$_1$)
             follow by Lemmas \ref{lemma:H1} and \ref{lemma:H2_new}.
       \item The implication 
             $({\rm iii}_{1}) \Longrightarrow ({\rm ii}_{1})$ is not true in general.
             Take 
             $p=2$, $q=\infty$, $X=B$, $\P=\Q$, $T=1$, $k=0$ and $d=1$,
             then the counterexample $g(x)=\sqrt{x\vee 0}$ is discussed in \cite{geis:geis:gobe:12}.
       \end{enumerate}

\item Let $(\Omega,\cF,(\cF_t)_{t\in [0,T]},\P)$ be a stochastic basis satisfying the usual conditions, i.e.
      $(\Omega,\cF,\P)$ is complete,
      $(\cF_t)_{t\in [0,T]}$ is right-continuous, 
      $\cF_0$ is generated by the null-sets of $\cF$ and where we can assume w.l.o.g. that
      $\cF=\cF_T$. Assume further that the filtration is obtained as augmentation of the natural
      filtration of a standard $d$-dimensional Brownian motion $W=(W_t)_{t\in [0,T]}$ starting in zero.
      It is known \cite[Corollary 1 on p. 187]{prot:04} that on this stochastic basis all local martingales are             
      continuous.
      Assume a progressively measurable $d$-dimensional process $\beta=(\beta_t)_{t\in [0,T]}$ 
      with $\sup_{t,\omega}|\beta_t(\omega)|<\infty$ and consider the unique strong solution of the
      SDE
      \[ X_t = x_0 + \int_0^t \sigma(s,X_s) dW_s + \int_0^t b(s,X_s) ds -
                     \int_0^t \beta_s ds. \]
      Letting, 
      \equa
      \gamma_s    &:= & \sigma^{-1} (s,X_s)\beta_s, \\
      B_t         &:= & W_t - \int_0^t \gamma_s ds, \\
      1/\lambda_t &:= & e^{ \int_0^t \gamma_s^\top dW_s - \frac{1}{2} \int_0^t |\gamma_s|^2 ds}
                    =   e^{ \int_0^t \gamma_s^\top dB_s + \frac{1}{2} \int_0^t |\gamma_s|^2 ds},\\
      d\Q     &:= & (1/\lambda_T) d\P,
      \tion
      we obtain by the Girsanov Theorem that
      $(\Omega,\cF,(\cF_t)_{t\in [0,T]},\Q)$, $(B_t)_{t\in [0,T]}$ and $(X_t)_{t\in [0,T]}$ satisfy the 
      assumptions of our paper (i.e. all  martingales are continuous
      - which can be checked by expressing the conditional expectation under 
      $\Q$ by the  conditional expectation under $\P$-, so that local martingales 
      are continuous as well) and that
      $\lambda_T \in A_\alpha$ for all $\alpha\in (1,\infty)$.
      Hence the passage from $\Q$ to $\P$ corresponds to adding a drift to the diffusion $X$.

\item In the case the drift term in item (7) is Markovian, i.e. $\beta_t=\beta(t,X_t)$ for an
      appropriate $\beta:[0,T]\times\R^d \to \R^d$, and if we let
      $Y_t:= v(t,X_t)$ and $Z_t:= \nabla v(t,X_t)\sigma(t,X_t)$, then we get the BSDE
      \equa
      -dY_t & = & [k(t,X_t)Y_t +Z_t\sigma^{-1}(t,X_t)\beta_t]dt-Z_t dW_t,\\
         Y_T& = & g(X_T).
      \tion
      Then  it is proved in \cite{geis:geis:gobe:12} under certain conditions the equivalence between the 
      following assertions for $p\in [2,\infty)$, $\theta\in(0,1]$ and polynomially bounded $g$:
      \begin{enumerate}
      \item $\sup_{t\in [0,T)}(T-t)^{-\frac{\theta}{2}}  \| g(X_T) - \E^{\cF_t}(g(X_T))\|_{L_p(\P)} < +\infty$.
      \item $\sup_{t\in [0,T)}(T-t)^{\frac{1-\theta}{2}} \| Z_t\|_{L_p(\P)} < +\infty$.
      \end{enumerate}
      These are the analogues of (i$_{\theta}$) and (ii$_{\theta}$) for $q=\infty$. 
\end{enumerate}
\end{remark}


\section{Proof of Theorem \ref{thm:equivalence}}
Through the whole section we assume that the condition (P) is satisfied.

\subsection{Preliminaries}

To estimate $L_p$ norms under different measures, the following lemma is useful.

\begin{lemma}\label{lemma:1}
For any $1<\alpha<p<\infty$, $\lambda_T\in A_{\alpha}(\Q)$, $r:=\frac{p}{p-\alpha}$,
$U\in L_p(\Omega,\cF,\P)$, $V\in L_r(\Omega,\cF,\Q)$ and
$c_{\eqref{eqn:lemma:1}}>0$ such that 
$[\eqt (  | \frac{\la_t}{\la_T}  |^{\frac{1}{\alpha-1}} ) ]^{\frac{\alpha-1}{p}} \le c_{\eqref{eqn:lemma:1}}$ a.s.
we have that
\begin{equation}\label{eqn:lemma:1}    \eqt|UV|
   \le c_{\eqref{eqn:lemma:1}}
                  \left [ \ept |U|^p \right ]^{\frac{1}{p}} [ \eqt   | V |^{r}                   ]^{\frac{1}{r}} \mbox{ a.s.}
                  \end{equation}
\end{lemma}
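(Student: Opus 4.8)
The plan is to reduce the weighted conditional inequality to an application of the conditional Hölder inequality, with the weight $\lambda_T/\lambda_t$ playing the role of the change of density between the $\P$- and $\Q$-conditional expectations. First I would recall that for any nonnegative $\cF_T$-measurable $W$,
\begin{equation*}
  \ept W = \frac{\eqt(\lambda_T W)}{\lambda_t} \quad\text{a.s.},
\end{equation*}
which is the conditional Bayes rule; equivalently $\eqt(\lambda_T W) = \lambda_t\,\ept W$. Hence, writing $|UV| = |U|\,|V|$ and inserting $1 = (\lambda_T/\lambda_t)^{1/p}(\lambda_t/\lambda_T)^{1/p}$, I would split the integrand as
\begin{equation*}
  |UV| = \left(|U|\,\Big(\tfrac{\lambda_T}{\lambda_t}\Big)^{1/p}\right)\cdot\left(|V|\,\Big(\tfrac{\lambda_t}{\lambda_T}\Big)^{1/p}\right),
\end{equation*}
and apply the conditional Hölder inequality with exponents $p$ and $p' := p/(p-1)$ under $\Q$:
\begin{equation*}
  \eqt|UV| \le \Big[\eqt\Big(|U|^p\,\tfrac{\lambda_T}{\lambda_t}\Big)\Big]^{1/p}\Big[\eqt\Big(|V|^{p'}\,\Big(\tfrac{\lambda_t}{\lambda_T}\Big)^{p'/p}\Big)\Big]^{1/p'}.
\end{equation*}
The first bracket is exactly $[\lambda_t^{-1}\eqt(\lambda_T|U|^p)]^{1/p} = [\ept|U|^p]^{1/p}$ by the Bayes rule, which produces the desired first factor with no constant.

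For the second bracket I would apply the conditional Hölder inequality once more, this time to separate $|V|^{p'}$ from the weight $(\lambda_t/\lambda_T)^{p'/p}$. The natural exponents are $s := r/p'$ and its conjugate $s'$; one checks from $r = p/(p-\alpha)$ and $p' = p/(p-1)$ that $s = r/p' = (p-1)/(p-\alpha) > 1$, and the conjugate exponent satisfies $p' s' \cdot \tfrac{1}{p} = \tfrac{1}{\alpha-1}$, i.e. the weight gets raised precisely to the power $\tfrac{1}{\alpha-1}$. This yields
\begin{equation*}
  \Big[\eqt\Big(|V|^{p'}\big(\tfrac{\lambda_t}{\lambda_T}\big)^{p'/p}\Big)\Big]^{1/p'}
  \le \big[\eqt|V|^{r}\big]^{1/r}\,\Big[\eqt\big(\big|\tfrac{\lambda_t}{\lambda_T}\big|^{1/(\alpha-1)}\big)\Big]^{(\alpha-1)/p},
\end{equation*}
and the last factor is bounded by $c_{\eqref{eqn:lemma:1}}$ by hypothesis. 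Combining the two displays gives \eqref{eqn:lemma:1}.

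The only real bookkeeping — and the step where I expect the arithmetic to need care — is verifying that the exponents line up: that $s = r/p'$ is genuinely $>1$ (which forces $\alpha < p$, exactly the standing hypothesis) and that the conjugate exponent $s'$ makes the power of the weight come out to be $1/(\alpha-1)$, matching the $A_\alpha(\Q)$ definition. Concretely, $\tfrac{1}{s} + \tfrac{1}{s'} = 1$ with $\tfrac1s = \tfrac{p-\alpha}{p-1}$ gives $\tfrac{1}{s'} = \tfrac{\alpha-1}{p-1}$, so the weight exponent is $\tfrac{p'}{p}\cdot s' = \tfrac{1}{p-1}\cdot\tfrac{p-1}{\alpha-1} = \tfrac{1}{\alpha-1}$, as needed, and then $(1/p')\cdot(1/s') = \tfrac{p-1}{p}\cdot\tfrac{\alpha-1}{p-1} = \tfrac{\alpha-1}{p}$ gives the outer exponent on the weight factor. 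The integrability requirements $U\in L_p(\P)$ and $V\in L_r(\Q)$ are exactly what is needed to ensure all the conditional expectations above are a.s.\ finite and the applications of conditional Hölder are legitimate; note $\eqt(|U|^p\lambda_T/\lambda_t) = \lambda_t^{-1}\E_\Q^{\cF_t}(\lambda_T|U|^p) = \ept|U|^p < \infty$ a.s.\ since $|U|^p\in L_1(\P)$.
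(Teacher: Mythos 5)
Your proof is correct and in substance the same as the paper's: both arguments rest on the conditional Bayes rule together with two applications of the conditional H\"older inequality, with identical exponent bookkeeping ($s=r/p'=\tfrac{p-1}{p-\alpha}$, weight raised to the power $\tfrac{1}{\alpha-1}$, outer exponent $\tfrac{\alpha-1}{p}$ matching the hypothesis). The only cosmetic difference is that you apply H\"older under $\Q$ after splitting the weight as $(\lambda_T/\lambda_t)^{1/p}(\lambda_t/\lambda_T)^{1/p}$, whereas the paper first writes $\eqt|UV|=\lambda_t\,\ept(|UV|/\lambda_T)$ and performs both H\"older steps under $\P$; the two computations are equivalent.
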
     

\begin{proof} 
Letting $1=\frac{1}{p}+\frac{1}{p'} = \frac{1}{\alpha}+\frac{1}{\alpha'}$
one has a.s. that 
\equa
      \eqt | U V | 
& = & \lambda_t \ept (|UV|/\lambda_T ) \\
&\le& \lambda_t [\ept |U|^p]^\frac{1}{p}
                [\ept (|V|^{p'} \lambda_T^{-\frac{p'}{r}} \lambda_T^{-p'+\frac{p'}{r}} )]^\frac{1}{p'} \\
&\le& \lambda_t [\ept |U|^p]^\frac{1}{p}
                [\ept (|V|^r /\lambda_T)]^\frac{1}{r}
      [\ept \lambda_T^{-\alpha'}]^{\frac{r-p'}{p'r}} \\
&\le& c_{\eqref{eqn:lemma:1}}
                [\ept |U|^p]^\frac{1}{p}
                [\eqt |V|^r]^\frac{1}{r}.
\tion
\end{proof}
As simple consequences of this lemma for $V\equiv 1$ , observe that 
\begin{equation}\label{eq:lemma1:bis}    
       \|\eqt U\|_{L_p(\P)}
   \le c_{\eqref{eqn:lemma:1}} \|U\|_{L_p(\P)}
   \sptext{1}{for}{1} U\in L_p(\P).
\end{equation}
In the next step we will estimate $\nabla v(t,X_t)$ and $D^2v(t,X_t)$ in Lemmas \ref{lemma:H1} and
\ref{lemma:H2_new} from above by conditional moments of $M_T=K^X_T g(X_T)$ and $g(X_T)$,
and extend therefore Lemma \ref{lemma:Malliavin_weights} to the case $k=0$ and allow a change of
measure by Muckenhoupt weights.

\begin{lemma}\label{lemma:H1} 
For any $p\in (1,\infty)$, we have a.s. that
\begin{equation}
      |\nabla v(t,X_t)|
 \le c_{\eqref{eqn:lemma:H1}}\left [
        \frac{\Big( \eqt  |M_T -\eqt M_T |^p \Big)^{\frac{1}{p}}}
             {\sqrt{T-t}}
           +(T-t)\Big(\eqt |M_T|^p\Big)^{\frac{1}{p}}\right ] ,
      \label{eqn:lemma:H1}
\end{equation}
where $c_{\eqref{eqn:lemma:H1}}>0$ depends at most on $(\sigma,b,k,p)$.
The same estimate holds true
if the measure $\Q$ is replaced by the measure $\P$ with
$\lambda\in A_{\alpha}(\Q)$ and $\alpha \in (1,p)$,
where the constant $c_{\eqref{eqn:lemma:H1}}>0$ might additionally depend on $\Q$
(and therefore implicitly on $\alpha$). 
\end{lemma}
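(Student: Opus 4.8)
The plan is to derive the estimate first under $\Q$ and then transfer it to $\P$ by the Muckenhoupt machinery. For the $\Q$-estimate, I would fix $t\in(0,T)$ and apply Lemma \ref{lemma:Malliavin_weights} with a suitable choice of intermediate time. The obstacle is that $v$ is associated with the operator $\mathcal{L}$ containing the potential $k$, whereas Lemma \ref{lemma:Malliavin_weights} only handles the case $k=0$ (it uses the transition density $\Gamma_X$ of $X$ itself). To bridge this, I would use the Feynman--Kac / semigroup representation: writing $m\in(t,T)$ for a midpoint, say $m=(t+T)/2$ so that $m-t=T-m=(T-t)/2$, one has
\[
   M_t = K^X_t v(t,X_t) = \eqt M_m = \eqt\bigl( K^X_m v(m,X_m)\bigr),
\]
and differentiating in the spatial variable $x$ of the frozen starting point $X_t=x$ one gets $\nabla v(t,x)$ expressed through $\nabla$ of the function $x\mapsto \E\bigl(K^X_{t,m} v(m,X_m)\mid X_t=x\bigr)$, where $K^X_{t,m}=e^{\int_t^m k(r,X_r)dr}$. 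Since $K^X_{t,m}$ is bounded by $e^{\|k\|_\infty T}$ and is Lipschitz-type smooth in the starting point (its $x$-derivative is $O(m-t)=O(T-t)$ times a bounded factor, using (C1) and the bounds on $\nabla X$), one can split $\nabla v(t,x)$ into the term where $\nabla$ hits $v(m,X_m)$ — controlled by Lemma \ref{lemma:Malliavin_weights} applied to $h=v(m,\cdot)$ with $q=p$ — and a remainder term where $\nabla$ hits $K^X_{t,m}$, which after Jensen is $O(T-t)$ times a bounded multiple of $[\eqt |v(m,X_m)|^p]^{1/p}$.

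Next I would compare the quantities at time $m$ with those at the terminal time $T$. For the first term, the conditional oscillation $[\eqt |v(m,X_m)-\eqt v(m,X_m)|^p]^{1/p}$ divided by $\sqrt{m-t}\asymp\sqrt{T-t}$ must be dominated by $[\eqt|M_T-\eqt M_T|^p]^{1/p}/\sqrt{T-t}$ plus $(T-t)[\eqt|M_T|^p]^{1/p}$. Here I use that $M$ is a $\Q$-martingale, so $\eqt M_m=\eqt M_T=M_t$; writing $v(m,X_m)=K^{-1}_{0,m}(K_{0,m}v(m,X_m))$ with $K_{0,m}=K^X_m$ bounded above and below, and $M_m-M_t = M_m-\eqt M_m$, a conditional $L_p$-contraction argument ($\eqt$ is an $L_p(\Q)$-contraction, and $|a-\eqt a|\le |a-\eqt b|+|\eqt(b-a)|$) reduces the oscillation of $v(m,X_m)$ to that of $v(T,X_T)=g(X_T)=K^{-1}_{0,T}M_T$, at the cost of the boundedness constants of $K$ and an extra term of the form $(T-t)[\eqt|M_T|^p]^{1/p}$ coming from the $t$-to-$T$ variation of the factor $K_{0,\cdot}$ (exactly as in Remark \ref{remark:thm_equivalence}(5)). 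Similarly $[\eqt|v(m,X_m)|^p]^{1/p}\le e^{\|k\|_\infty T}[\eqt|M_T|^p]^{1/p}$ by the martingale property and $|v(m,X_m)|\le K^{-1}_{0,m}|M_m|$ together with $\eqt|M_m|^p\le\eqt|M_T|^p$. Collecting these gives \eqref{eqn:lemma:H1} under $\Q$.

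Finally, for the change of measure I would invoke Lemma \ref{lemma:1}: with $r=p/(p-\alpha)$, applying \eqref{eqn:lemma:1} to $U$ being the relevant $\cF_T$-measurable random variable and $V\equiv 1$ turns conditional $\P$-moments into conditional $\Q$-moments up to the constant $c_{\eqref{eqn:lemma:1}}$, and conversely \eqref{eq:lemma1:bis}-type estimates let one pass back; more precisely, since the left-hand side $|\nabla v(t,X_t)|$ is a fixed (measure-independent) quantity, it suffices to bound it by the $\Q$-expression and then bound each $\Q$-conditional moment $[\eqt|\cdot|^p]^{1/p}$ by the corresponding $\P$-conditional moment via Lemma \ref{lemma:1} with a judicious Hölder exponent — this is where the hypothesis $\lambda_T\in A_\alpha(\Q)$ with $\alpha\in(1,p)$ enters, guaranteeing the bound $[\eqt(|\lambda_t/\lambda_T|^{1/(\alpha-1)})]^{(\alpha-1)/p}\le c$ and hence $c_{\eqref{eqn:lemma:1}}<\infty$. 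The main technical obstacle is the first step: cleanly producing the spatial derivative $\nabla v(t,\cdot)$ from a Feynman--Kac representation with a nonzero, merely bounded-and-$C^2_b$ potential $k$, and showing the derivative of the multiplicative functional $K^X_{t,m}$ with respect to the initial condition is indeed $O(T-t)$ — this requires differentiating the flow $x\mapsto X^{t,x}_r$ under (C1) and controlling $\E\sup_r|\nabla_x X^{t,x}_r|^{p}$, which is standard but must be done with constants depending only on $(\sigma,b,k,p)$.
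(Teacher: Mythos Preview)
The $\Q$-estimate is where your proposal and the paper diverge, and where there is a gap. You write $v(t,x)=\E\bigl(K^X_{t,m}\,v(m,X_m)\mid X_t=x\bigr)$ and split $\nabla v(t,x)$ by a product rule into ``$\nabla$ hits $v(m,X_m)$'' (to be handled by Lemma~\ref{lemma:Malliavin_weights}) plus ``$\nabla$ hits $K^X_{t,m}$''. But Lemma~\ref{lemma:Malliavin_weights} bounds $\nabla H(t,X_t)$ for $H(t,x)=\E\bigl(v(m,X_m)\mid X_t=x\bigr)$ --- the gradient of the $k{=}0$ semigroup applied to $v(m,\cdot)$ --- and \emph{not} the flow-derivative term $\E\bigl(K^X_{t,m}\,\nabla v(m,X_m)\,\nabla_x X_m\mid X_t=x\bigr)$. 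The discrepancy is the cross term $\E\bigl((K^X_{t,m}-1)\,\nabla v(m,X_m)\,\nabla_x X_m\mid X_t=x\bigr)$, which your two-term splitting does not cover; controlling it forces a bound on $|\nabla v(m,X_m)|$, the very quantity under study (now at time $m$). Falling back on the a~priori estimate $|\nabla v(m,x)|\le c(T-m)^{-1/2}e^{c|x|^{\kappa_g}}$ from Proposition~\ref{proposition:properties_Gamma} would make the constant depend on $g$, contrary to the lemma.

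The paper avoids this circularity with a Bismut--Elworthy--Li type representation: it shows that $N_t:=K^X_t\nabla v(t,X_t)\nabla X_t+\bigl(\int_0^t\nabla k(s,X_s)\nabla X_s\,ds\bigr)M_t$ is a $\Q$-martingale (by differentiating the PDE $\mathcal{L}v=0$), time-averages $N$ over $[t,T]$, and uses the conditional It\^o isometry to express $(T-t)K^X_t\nabla v(t,X_t)$ as $\eqt\!\big[(M_T-M_t)\bigl(\int_t^T(\sigma^{-1}\nabla X[\nabla X_t]^{-1})^\top dB_r\bigr)^\top\big]$ plus a $\nabla k$-correction of size $\int_t^T(T-s)\,ds=(T-t)^2/2$; no intermediate gradient of $v$ is needed, and the factor $(T-t)$ in the second term of \eqref{eqn:lemma:H1} drops out directly. (Your idea can be made non-circular via Duhamel --- write $v(t,x)-H(t,x)=\int_t^T\E\bigl((kv)(r,X_r)\mid X_t=x\bigr)\,dr$ and apply Lemma~\ref{lemma:Malliavin_weights} to each $r$-slice --- but this yields only $(T-t)^{1/2}$ in the second term.) Your change-of-measure step is the paper's: prove the $\Q$-estimate for an exponent $q\in(1,p)$ and use Lemma~\ref{lemma:1} with $V\equiv1$ together with \eqref{eq:lp} to pass to $(\ept|\cdot|^p)^{1/p}$; just note the exponent under $\Q$ must be $q<p$, not $p$ as you wrote.
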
     
\smallskip

\begin{proof}
The statement for $\P$ for $p\in (1,\infty)$ can be deduced from the statement for 
$\Q$ for $q \in (1,p)$. Let us fix $1<q<p<\infty$, define $p_0:=p/q\in (1,\infty)$,
take $r\in (p_0',\infty)$ and let $\beta:= \frac{p_0'r-p_0'}{r-p_0'}$. For $\lambda\in A_\alpha(\Q)$
with $1=(1/\alpha)+(1/\beta)$ we apply Lemma \ref{lemma:1} with $p$ replaced by $p_0$ 
and get 
\[     \left ( \E^{{\cF_t}}_\Q |Z|^q \right )^\frac{1}{q} 
   \le c_{\eqref{eqn:lemma:1}}^\frac{1}{q} \left ( \E^{{\cF_t}}_\P |Z|^{p} \right )^\frac{1}{p} \]
and, by \eqref{eq:lp},
\[     \left ( \E^{{\cF_t}}_\Q |Z-\E^{{\cF_t}}_\Q Z|^q \right )^\frac{1}{q} 
   \le 2 \left ( \E^{{\cF_t}}_\Q |Z-\E^{{\cF_t}}_\P Z|^q \right )^\frac{1}{q} 
   \le 2 c_{\eqref{eqn:lemma:1}}^\frac{1}{q} \left ( \E^{{\cF_t}}_\P |Z-\E^{{\cF_t}}_\P Z|^{p} 
       \right )^\frac{1}{p}\]
whenever $Z\in \bigcap_{r\in [1,\infty)} L_r(\Q)$ (cf. Remark \ref{remark:L_p-inclusions}). 
Because 
$   \lim_{r\to\infty} \frac{p_0'r-p_0'}{r-p_0'}
  = p_0' = \frac{p}{p-q}$ and the convergence is from above, we can take $\beta$ to be in 
 $\kla \frac{p}{p-q},\infty \mer$. Sending $q$ to $1$ gives that
 $\beta \in \kla \frac{p}{p-1},\infty \mer$ or $\alpha\in (1,p)$.
\bigskip

Now we follow a martingale approach (see, for example, \cite{gobe:muno:05}) and prove the statement for
the measure $\Q$. 
\bigskip

(a) We define $(\nabla X_t)_{t\in [0,T]}$ to be the solution of a linear SDE 
(see \cite[Chapter 5]{prot:04}):
\[   \nabla X_t  
   = I_d + \sum_{j=1}^d\int_0^t \nabla\sigma_j(s,X_s) \nabla X_s dB^j_s 
         +\int_0^t  \nabla b(s,X_s) \nabla X_s ds \]
         and $\sigma(.)=(\sigma_1(.),\dots,\sigma_d(.)).$ This matrix-valued process is a.s. invertible and its inverse satisfies
\equa
[\nabla X_t]^{-1}
& = & I_d- \sum_{j=1}^d \int_0^t [\nabla X_s]^{-1} \nabla\sigma_j(s,X_s) dB^j_{s}\\
&   & -\int_0^t [\nabla X_s]^{-1}(\nabla b(s,X_s) -\sum_{j=1}^d (\nabla\sigma_j(s,X_s))^2) ds.
\tion
(b) Next we show that $(N_t)_{t\in [0,T)}$ with
\[
       N_t 
   :=   K^X_t\nabla v(t,X_t)\nabla X_t
      + \left (\int_0^t \nabla k(s,X_s) \nabla X_s ds\right )  M_t 
\]
is a $\Q$-martingale. One way consists in using It\^o's formula to
verify that $N$ is a martingale. In fact,
the bounded variation term in the It\^o-process decomposition
of $N$ is 
\[ \int_0^t \left [ K_s^X k(s,X_s) \nabla v(s,X_s)\nabla X_s + K_s^X {C_s} \right ] ds 
   + \int_0^t \left [ \nabla k(s,X_s) \nabla X_s M_s \right ] ds, \]
where $\int_0^t  {C_s} ds$ is the bounded variation term of $\nabla v(t,X_t)\nabla X_t$. 
Hence it is sufficient to show that
\[  {C_s} = -  \nabla [ v(s,X_s) k(s,X_s) ] \nabla X_s.\]
The PDE for $w=\nabla v$ on $[0,T)\times\R^d$ reads as
\begin{equation}\label{eqn:pde_for_gradient}
       \frac{\partial}{\partial t} w_i + \frac{1}{2} \langle A,D^2 w_i \rangle 
                                   + \langle b, (\nabla w_i)^T \rangle
    = - \frac{1}{2} \langle  \partial_{x_i} A,D^2 v \rangle - \langle \partial_{x_i} b, w^T \rangle
     - \partial_{x_i} ( v k ).
\end{equation}
By a simple computation this gives that the bounded variation term of
$(\sum_{i=1}^d \frac{\partial v}{\partial x_i}(t,X_t) (\nabla X_t)_{il})_{t\in [0,T)}$ 
computes as
$ - \sum_{i=1}^d \frac{\partial (v k)}{\partial x_i}(t,X_t) (\nabla X_s)_{il} dt$
and step (b) is complete.
\smallskip

(c)  Exploiting the martingale property of $N$ between $t$ and some deterministic 
$S\in(t,T)$, we have
\begin{eqnarray}
&   & (S-t)\left [K^X_t\nabla v(t,X_t)\nabla X_t+\left (\int_0^t \nabla k(s,X_s) \nabla    
      X_s ds \right )  M_t \right ] \nonumber \\
& = & \eqt \bigg (\int_t^S \big [K^X_r\nabla v(r,X_r)\nabla X_r \nonumber
      + \left (\int_0^r \nabla k(s,X_s) \nabla X_s ds \right )  M_r \big ]
        dr \bigg ) \nonumber \\
& = & \eqt \left ( \left [\int_t^{S} K^X_r\nabla v(r,X_r) \sigma(r,X_r) dB_r \right ]
                  \left [\int_t^{S}  (\sigma(r,X_r)^{-1} \nabla X_r)^\top dB_r 
                                            \right ]^\top \right ) \nonumber \\
&   & + (S-t) M_t \left [ \int_0^t \nabla k(s,X_s) \nabla X_s ds\right ] 
      \nonumber \\
&   & + \eqt \left ( M_S \int_t^S \left [\int_t^r \nabla k(s,X_s) \nabla X_s ds\right ] dr\right ).
      \label{eq:tmp:1}
\end{eqnarray}
At the last equality, we have used the $\Q$-martingale property of $(M_t)_{t\in [0,T]}$
and the conditional It\^o isometry 
\[ \eqt \left (\left [\int_t^S A_{1,r} dB_r\right ] \left [\int_t^S A_{2,r} dB_r\right ]^\top\right ) =\eqt\left (\int_t^S A_{1,r} A^\top_{2,r} dr\right ) \]
(available for any square integrable and progressively measurable matrix-valued processes $(A_{1,r})_r$ and $(A_{2,r})_r$, having $d$ columns  and an arbitrary number of rows).
After simplifications, \eqref{eq:tmp:1} writes
\equa
&   & (S-t)K^X_t\nabla v(t,X_t)\nabla X_t \\
& = & \eqt \left ([M_S-M_t]\left [\int_t^{S}  (\sigma(r,X_r)^{-1} \nabla X_r)^\top dB_r\right ]^\top\right )\\
&   & +\eqt\left (M_S \left [\int_t^S (S-s) \nabla k(s,X_s) \nabla X_s ds\right ] \right ).
\tion
Using that $M_S\to M_T$ in $L_2(\Q)$ we derive
\equa
&   & (T-t)K^X_t\nabla v(t,X_t) \\
& = & \eqt\left ([ M_T - M_t ]\left [\int_t^{T}  (\sigma(r,X_r)^{-1} \nabla X_r[\nabla X_t]^{-1})^\top dB_r
      \right ]^\top \right ) \\
&   & +\eqt\left ( M_T \left [ \int_t^T (T-s) \nabla k(s,X_s) \nabla X_s[\nabla X_t]^{-1} ds \right ] \right ).
\tion
Finally, observe that $\sup_{t\in [0,T)}\sup_{r\in [t,T]}\eqt(|\nabla X_r[\nabla X_t]^{-1}|^q)$ is a bounded random variable for any $q\geq 1$; therefore, standard computations  complete our assertion.
\end{proof}
\medskip

For the following we let
$m(t,x)   :=  v(t,x) k(t,x)$.
\begin{lemma}\label{lemma:m_M}
For $0\le r <t \le T$ and $1<p_0<p<\infty$
one has a.s. that
\begin{multline}\label{eqn:lemma:m_M}
     \left ( \E^{\cF_r}_\Q | m(t,X_t) - \E^{\cF_r}_\Q m(t,X_t) |^{p_0} \right )^\frac{1}{p_0} \\
 \le c_{\eqref{eqn:lemma:m_M}} \left [ \sqrt{t-r} \left ( \E^{\cF_r}_\Q |M^*|^p \right )^\frac{1}{p}
                + \left ( \E^{\cF_r}_\Q | M_t - M_r |^{p_0}
                  \right )^\frac{1}{p_0} \right ] 
\end{multline}
where $M^*:= \sup_{s\in [0,T]} |M_s|$ and $c_{\eqref{eqn:lemma:m_M}}>0$ depends at most on
$(p_0,p,\sigma,b,k)$.
\end{lemma}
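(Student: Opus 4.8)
The plan is to estimate the oscillation of $m(t,X_t)=v(t,X_t)k(t,X_t)$ around its $\cF_r$-conditional mean by decomposing $m$ into a piece that is, up to the potential factor $K^X$, the value function $M_t=K^X_t v(t,X_t)$ (whose increments $M_t-M_r$ appear on the right-hand side), plus lower-order remainder terms that are controlled by $\sqrt{t-r}$ times $(\E^{\cF_r}_\Q|M^*|^p)^{1/p}$. Concretely, I would write
\begin{equation*}
m(t,X_t) = k(t,X_t) v(t,X_t) = k(t,X_t) (K^X_t)^{-1} M_t,
\end{equation*}
and since $k$ is bounded and $(K^X_t)^{-1}=e^{-\int_0^t k(s,X_s)\,ds}$ is bounded with Lipschitz-in-time behaviour of the order $\|k\|_\infty$, the factor $k(t,X_t)(K^X_t)^{-1}$ differs from $k(r,X_r)(K^X_r)^{-1}$ by a term bounded by $c(\sigma,b,k)(|X_t-X_r| + (t-r))$. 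Using the identity $m(t,X_t)-\E^{\cF_r}_\Q m(t,X_t) = (m(t,X_t)-\Psi_r M_t) - \E^{\cF_r}_\Q(m(t,X_t)-\Psi_r M_t) + \Psi_r(M_t-\E^{\cF_r}_\Q M_t)$ with $\Psi_r:=k(r,X_r)(K^X_r)^{-1}$ being $\cF_r$-measurable, the last term is bounded by $\|\Psi_r\|_\infty \|M_t-M_r\|$ in the conditional $L_{p_0}$-norm (here one also uses $\E^{\cF_r}_\Q M_t = M_r$ by the $\Q$-martingale property of $M$), which gives exactly the second term on the right of \eqref{eqn:lemma:m_M}.

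For the remainder $m(t,X_t)-\Psi_r M_t = (\Psi(t,X_t)-\Psi_r) M_t$ with $\Psi(t,x):=k(t,x)e^{-\int\cdots}$ — more carefully one writes $m(t,X_t)=\Psi(t,X_t)M_t$ where $\Psi(t,x)=k(t,x)e^{-\int_0^t k}$ is not quite a function of $(t,x)$ alone because of the path integral, so instead I would argue directly: $m(t,X_t)-\Psi_r M_t = [k(t,X_t)-k(r,X_r)](K^X_t)^{-1}M_t + k(r,X_r)[(K^X_t)^{-1}-(K^X_r)^{-1}]M_t$. Both prefactors are bounded, in absolute value, by $c(|X_t-X_r|+(t-r))$ using (C1) (the $\gamma$-Hölder / $C^{0,2}_b$ regularity of $k$, Gaussian tail bounds on $X$, and the elementary estimate $|(K^X_t)^{-1}-(K^X_r)^{-1}|\le \|k\|_\infty(t-r)e^{\|k\|_\infty T}$ combined with $|k(r,X_r)|\le\|k\|_\infty$). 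Taking conditional $L_{p_0}(\Q)$-norms, applying the conditional Hölder inequality to split off $M_t$ (or $M^*$) at exponent $p/p_0$ and the prefactor at the conjugate exponent, and bounding $(\E^{\cF_r}_\Q|X_t-X_r|^{q})^{1/q}\le c(\sigma,b)\sqrt{t-r}$ for the relevant $q$ (a standard moment estimate for the SDE, uniform in the starting point, which follows from the Gaussian bounds already quoted after Proposition \ref{proposition:properties_Gamma}) yields the bound $c\sqrt{t-r}\,(\E^{\cF_r}_\Q|M^*|^p)^{1/p}$. Finally, subtracting the conditional mean at most doubles the constant, by \eqref{eq:lp}.

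The main obstacle I anticipate is purely bookkeeping: tracking the gap between "$p_0$" and "$p$" so that the conditional Hölder splitting is legitimate (one needs $p_0<p$, which is the hypothesis, and then the prefactor sits in $L_{s}$ with $1/s = 1/p_0 - 1/p > 0$, which is fine since it has Gaussian-type moments of all orders uniformly), and making sure every moment estimate for $X_t-X_r$ and for $(K^X_t)^{-1}-(K^X_r)^{-1}$ is uniform in the starting point $X_r$ — this is where one must invoke the flow property of the SDE and the fact, noted in the excerpt, that the Gaussian tail constant for $X$ does not depend on $x_0$. No deeper idea is needed: the lemma is a regularity-transfer statement converting the already-established control on $M$ (and on $M^*$, which is in every $L_r(\Q)$) into control on the oscillation of $vk$, and all estimates are of the type "bounded function times a small-time increment plus a martingale increment."
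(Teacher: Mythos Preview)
Your approach is essentially the same as the paper's: both write $m(t,X_t)=k(t,X_t)\,(K_t^X)^{-1}\,M_t$ as a product of three factors, isolate the martingale increment $M_t-M_r$ from the factor $M_t$, and bound the oscillations of the two bounded factors by $c\sqrt{t-r}$ via the conditional H\"older inequality with $\frac{1}{p_0}=\frac{1}{p}+\frac{1}{\beta}$. The paper packages the first step as a general telescoping inequality for the conditional oscillation of a product $U_1\cdots U_N$, but this is only cosmetically different from your direct decomposition.

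There is one small gap. You anchor at $\Psi_r=k(r,X_r)(K_r^X)^{-1}$ and claim $|k(t,X_t)-k(r,X_r)|\le c\big(|X_t-X_r|+(t-r)\big)$. Assumption {\rm (C1)} gives $k\in C^{0,2}_b$ with only \emph{local} $\gamma$-H\"older regularity in the parabolic metric; it does \emph{not} give a global Lipschitz (or even $1/2$-H\"older) bound in $t$, so the time increment $|k(t,X_r)-k(r,X_r)|$ is not $O(\sqrt{t-r})$ with a deterministic constant. The fix is immediate: anchor the $k$-factor at $k(t,X_r)$ instead (still $\cF_r$-measurable, since $t$ is deterministic), so that only the spatial Lipschitz bound $|k(t,X_t)-k(t,X_r)|\le \|\nabla k\|_\infty |X_t-X_r|$ is needed, followed by the standard SDE moment estimate $\big(\E^{\cF_r}_\Q|X_t-X_r|^\beta\big)^{1/\beta}\le c(\sigma,b,\beta)\sqrt{t-r}$. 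This is precisely what the paper does. Your treatment of the factor $(K_t^X)^{-1}$ is correct as written, since there the time increment is controlled elementarily by $\|k\|_\infty(t-r)e^{\|k\|_\infty T}$.
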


\begin{proof}
(a) For 
$\frac{1}{p_0} = \frac{1}{q_k}+\frac{1}{r_k}
             = \frac{1}{s_k}+\frac{1}{t_k}+\frac{1}{r_k}$
with $r_k,s_k,t_k\in {[p_0,\infty]}$,             
a sub-$\sigma$-algebra $\mathcal{G}\subseteq \cF$, 
$\underline{U}_k:=U_1\cdots U_k$ and
$\overline{U}_k:=U_k\cdots U_N$ with 
$\underline{U}_0:=1$ and  
$\overline{U}_{N+1}:=1$, and for 
 $\underline{U}_{k-1}\in L_{t_k}(\Q)$,
 $U_k \in L_{s_k}(\Q)$,
 $\overline{U}_{k+1}\in L_{r_k}(\Q)$, where $k=1,...,N$,
we get by a telescoping sum argument and the conditional H\"older inequality  that 
\equa
&   & \kla \E^\mathcal{G}_\Q |U_1\cdots U_N - E^\mathcal{G}_\Q(U_1 \cdots U_N)|^{p_0} \mer^\frac{1}{p_0} \\
&\le& \sum_{k=1}^N \kla \E^\mathcal{G}_\Q |[\E^\mathcal{G}_\Q (\underline{U}_{k-1})] U_k  - 
                              \E^\mathcal{G}_\Q (\underline{U}_k) |^{q_k} \mer^\frac{1}{q_k} 
      \kla \E^\mathcal{G}_\Q |\overline{U}_{k+1}|^{r_k} \mer^\frac{1}{r_k} \\
&\le& \sum_{k=1}^N \kla \E^\mathcal{G}_\Q |[\E^\mathcal{G}_\Q (\underline{U}_{k-1})] U_k  - 
              \E^\mathcal{G}_\Q (\underline{U}_{k-1}) \E^\mathcal{G}_\Q U_k 
                              |^{q_k} \mer^\frac{1}{q_k} 
       \kla \E^\mathcal{G}_\Q |\overline{U}_{k+1}|^{r_k} \mer^\frac{1}{r_k} \\     
&   & +  \sum_{k=1}^N \kla \E^\mathcal{G}_\Q |[\E^\mathcal{G}_\Q (\underline{U}_{k-1})] \E^\mathcal{G}_\Q U_k  - 
                              [\E^\mathcal{G}_\Q (\underline{U}_{k  })] |^{q_k} \mer^\frac{1}{q_k} 
      \kla \E^\mathcal{G}_\Q |\overline{U}_{k+1}|^{r_k} \mer^\frac{1}{r_k}      \\
&\le& 2 \sum_{k=1}^N 
      \kla \E^\mathcal{G}_\Q | \underline{U}_{k-1}|^{t_k} \mer^\frac{1}{t_k}
      \kla \E^\mathcal{G}_\Q |U_k  - \E^\mathcal{G}_\Q U_k|^{s_k} \mer^\frac{1}{s_k} 
      \kla \E^\mathcal{G}_\Q |\overline{U}_{k+1}|^{r_k} \mer^\frac{1}{r_k}.
\tion 
(b) We apply (a) to $N=3$ and $m(s,X_s) =  k(s,X_s) (K_s^X)^{-1} M_s$ to derive
\equa
&   &  \left ( \E^{\cF_r}_\Q | m(t,X_t) - \E^{\cF_r}_\Q m(t,X_t) |^{p_0} \right )^\frac{1}{p_0} \\
&\le&  2 \|k\|_\infty e^{T\|k\|_\infty} \left ( \E^{\cF_r}_\Q | M_t - M_r |^{p_0} \right )^\frac{1}{p_0}  \\
&   & + 2  \left ( \E^{\cF_r}_\Q | k(t,X_t) - \E^{\cF_r}_\Q k(t,X_t) |^\beta \right )^\frac{1}{\beta}
      e^{T\|k\|_\infty}  \left ( \E^{\cF_r}_\Q |M^*|^p \right )^\frac{1}{p} \\
&   &  + 2 \|k\|_\infty \left ( \E^{\cF_r}_\Q | (K_t^X)^{-1} - \E^{\cF_r}_\Q (K_t^X)^{-1} |^\beta 
      \right )^\frac{1}{\beta}  \left ( \E^{\cF_r}_\Q |M^*|^p \right )^\frac{1}{p}
\tion
for $\frac{1}{p_0}=\frac{1}{p} + \frac{1}{\beta}$.
We conclude by
\equa
      \left ( \E^{\cF_r}_\Q | k(t,X_t)  - \E^{\cF_r}_\Q k(t,X_t) |^{\beta} \right )^\frac{1}{\beta} 
&\le&  2  \left ( \E^{\cF_r}_\Q | k(t,X_t)  - k(t,X_r) |^{\beta} \right )^\frac{1}{\beta} \\
&\le&  2     \| \nabla k\|_\infty 
        \left ( \E^{\cF_r}_\Q | X_t  - X_r |^{\beta} \right )^\frac{1}{\beta} \\
&\le&  2 \| \nabla k\|_\infty  c(b,\sigma,\beta) \sqrt{t-r}  
\tion
and 
$\left ( \E^{\cF_r}_\Q | (K_t^X)^{-1}  - \E^{\cF_r}_\Q (K_t^X)^{-1} |^{\beta} 
      \right )^\frac{1}{\beta}
        \le 2 \|k \|_\infty (t-r) e^{T \|k\|_\infty}$.
\end{proof}
\medskip

\begin{lemma}\label{lemma:proportional_estimate}
For $0\le r < t < T$ and $p\in (1,\infty)$ one has a.s. that
\begin{equation}\label{eqn:lemma:proportional_estimate}
     \left ( \E^{\cF_r}_\Q | M_t - M_r |^p \right )^\frac{1}{p}
   \le c_{\eqref{eqn:lemma:proportional_estimate}} \left [ 
           \left ( \frac{t-r}{T-t} \right )^\frac{1}{2}
             \left ( \E^{\cF_r}_\Q | M_T - M_r |^p \right )^\frac{1}{p}
           + (t-r)^\frac{1}{2} |M_r| 
         \right ]
\end{equation}
where $c_{\eqref{eqn:lemma:proportional_estimate}}\ge 1$ depends at most on $(p,\sigma,b,k)$.
\end{lemma}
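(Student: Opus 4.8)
The plan is to start from the It\^o representation of $M$: for $0\le r<t<T$ one has
$M_t-M_r=\int_r^t\phi_s\,dB_s$ with $\phi_s:=K_s^X\,\nabla v(s,X_s)\,\sigma(s,X_s)$, so that $\langle M\rangle_t-\langle M\rangle_r=\int_r^t|\phi_s|^2\,ds$ and, by the boundedness of $k$ and $\sigma$ in (C1), $|\phi_s|\le c(\sigma,k)\,|\nabla v(s,X_s)|$ with a deterministic constant. Since $M$ is a genuine $\Q$-martingale closed by $M_T$ (with $M_s=\E^{\cF_s}_\Q M_T$), the first step is to invoke the conditional Burkholder--Davis--Gundy inequality in its ``moment-inside'' form, valid for every $p\in(0,\infty)$, to obtain $\E^{\cF_r}_\Q|M_t-M_r|^p\le c_p\,\E^{\cF_r}_\Q\big(\int_r^t|\phi_s|^2\,ds\big)^{p/2}$ a.s. The remaining task is to bound the right-hand side by feeding in the pointwise gradient estimate of Lemma \ref{lemma:H1} and then exchanging the time integral with a conditional $L_p$-norm.

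For the pointwise step I would apply Lemma \ref{lemma:H1} at each time $s\in[r,t]$, with $p$ in the role of its exponent, and use $\E^{\cF_s}_\Q M_T=M_s$, the triangle inequality in $L_p(\Q,\cF_s)$, and the elementary bound $\|M_s-M_r\|_{L_p(\Q,\cF_s)}=|M_s-M_r|=|\E^{\cF_s}_\Q(M_T-M_r)|\le\|M_T-M_r\|_{L_p(\Q,\cF_s)}=:\rho_s$, to get $(\E^{\cF_s}_\Q|M_T-M_s|^p)^{1/p}\le 2\rho_s$ and $(\E^{\cF_s}_\Q|M_T|^p)^{1/p}\le\rho_s+|M_r|$. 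Because $T-t\le T-s\le T$ on $[r,t]$, inserting these into Lemma \ref{lemma:H1} and absorbing lower-order terms yields the a.s. bound $|\phi_s|^2\le c_p\big[\rho_s^2/(T-t)+|M_r|^2\big]$ for $s\in[r,t]$, where $c_p$ depends at most on $(p,\sigma,b,k)$. Integrating in $s$ and using $(a+b)^{p/2}\le c_p(a^{p/2}+b^{p/2})$ gives, a.s.,
\[
 \Big(\int_r^t|\phi_s|^2\,ds\Big)^{p/2}
 \le c_p\Big[(T-t)^{-p/2}\Big(\int_r^t\rho_s^2\,ds\Big)^{p/2}+(t-r)^{p/2}|M_r|^p\Big].
\]

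It then remains to prove the crux estimate $\E^{\cF_r}_\Q\big(\int_r^t\rho_s^2\,ds\big)^{p/2}\le(t-r)^{p/2}\,\E^{\cF_r}_\Q|M_T-M_r|^p$. Here the tower property gives $\E^{\cF_r}_\Q\rho_s^p=\E^{\cF_r}_\Q|M_T-M_r|^p$, independent of $s$, and it is combined with Minkowski's inequality: for $p\ge2$ I would apply the ordinary Minkowski integral inequality in $L_{p/2}(\Q,\cF_r)$ to $\int_r^t\rho_s^2\,ds$, while for $p\in(1,2)$ I would use the mixed-norm version $\big(\E^{\cF_r}_\Q(\int_r^t\rho_s^2\,ds)^{p/2}\big)^{1/p}\le\big(\int_r^t(\E^{\cF_r}_\Q\rho_s^p)^{2/p}\,ds\big)^{1/2}$, valid precisely because $p\le 2$. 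In both cases the crux estimate follows. Collecting the three displays, multiplying out the powers of $t-r$ and $T-t$, and taking $p$-th roots then produces \eqref{eqn:lemma:proportional_estimate} with a constant $c_{\eqref{eqn:lemma:proportional_estimate}}$ which can be taken $\ge1$ and depends at most on $(p,\sigma,b,k)$.

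I expect the main obstacle to be this last step in the range $p\in(1,2)$: one must \emph{not} replace $\rho_s$ by an $L_2$-type quantity (for instance via Doob's maximal inequality applied to the martingale $s\mapsto\E^{\cF_s}_\Q|M_T-M_r|^p$), since $\|M_T-M_r\|_{L_2(\Q)}$ may be far larger than $\|M_T-M_r\|_{L_p(\Q)}$ and the estimate would no longer be sharp; keeping $\rho_s$ as a genuine conditional $L_p$-norm and swapping it with the time integral through the appropriate form of Minkowski's inequality is what makes the bound come out uniform in $p\in(1,\infty)$. A secondary point to check carefully is that the conditional BDG inequality is indeed available with the power $p/2$ inside the conditional expectation for all $p>0$, and that $M$ is a true (not merely local) $\Q$-martingale on $[0,t]$ so that $M_s=\E^{\cF_s}_\Q M_T$ and the conditional It\^o/BDG machinery applies.
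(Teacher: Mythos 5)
Your reduction (conditional BDG, the pointwise bound from Lemma \ref{lemma:H1}, the identities $M_s=\E^{\cF_s}_\Q M_T$ and the tower property) is the same skeleton as the paper's proof, and for $p\ge 2$ your interchange via Minkowski's integral inequality in conditional $L_{p/2}$ is correct and in fact a little cleaner than the paper's argument. The problem is the range $p\in(1,2)$, which the lemma claims and which is genuinely used later (Lemma \ref{lemma:H2_new} applies this lemma with exponent $p_0=\frac{1+p}{2}$, possibly in $(1,2)$). There your ``mixed-norm version'' of Minkowski goes the wrong way: the generalized Minkowski inequality moves the \emph{larger} exponent inside, i.e. $\|\,\|f\|_{L_2(ds)}\,\|_{L_p}\le\|\,\|f\|_{L_p}\,\|_{L_2(ds)}$ holds for $p\ge 2$ and reverses for $p<2$, so it is not ``valid precisely because $p\le 2$'' but exactly the opposite. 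Moreover the crux estimate itself, as a statement about general positive martingales, is false for $p<2$: with $N_s:=\E^{\cF_s}_\Q|M_T-M_r|^p$ it reads $\E^{\cF_r}_\Q\big(\int_r^t N_s^{2/p}ds\big)^{p/2}\le c\,(t-r)^{p/2}N_r$, and a positive martingale on $[0,1]$ that doubles its value at each of $K$ equally spaced times (surviving each doubling with probability $1/2$, dropping to $0$ otherwise; the transitions can be made continuous by fast excursions) has $N_0=1$ but left-hand side of order $K^{1-p/2}\to\infty$. This is morally the failure of Doob's maximal inequality in $L_1$, so no constant depending only on $p$ can rescue the step.

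The missing idea -- and this is what the paper does -- is to lower the exponent \emph{before} integrating in time: apply Lemma \ref{lemma:H1} with $p_0:=\frac{1+p}{2}<p$ instead of $p$, bound $\int_r^t\big(\E^{\cF_u}_\Q|M_T-M_r|^{p_0}\big)^{2/p_0}du$ by $(t-r)\sup_{u\in[r,t]}\big(\E^{\cF_u}_\Q|M_T-M_r|^{p_0}\big)^{2/p_0}$, and then use Doob's maximal inequality at the exponent $p/p_0>1$ for the positive martingale $u\mapsto\E^{\cF_u}_\Q|M_T-M_r|^{p_0}$, followed by Jensen, $\big(\E^{\cF_t}_\Q|M_T-M_r|^{p_0}\big)^{p/p_0}\le\E^{\cF_t}_\Q|M_T-M_r|^{p}$, and the tower property to land back on $\E^{\cF_r}_\Q|M_T-M_r|^p$. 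Your stated reason for avoiding Doob (that it would force an $L_2$-type quantity and lose sharpness) does not apply: Doob is used at exponent $p/p_0$, not at $2$, and after Jensen nothing larger than the conditional $p$-th moment of $M_T-M_r$ appears. With that modification your proof closes; as written, it only establishes the lemma for $p\ge 2$.
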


\begin{proof}
Let $p_0:=\frac{1+p}{2}$,
$\lambda_u := K_u^X \nabla v(u,X_u) \sigma(u,X_u)$
and $0\le r \le u \le t$. Then Lemma \ref{lemma:H1} implies that
\equa
&   & |\lambda_u| {e^{-T \|k\|_\infty}}\\
&\le& \|\sigma\|_\infty c_{\eqref{eqn:lemma:H1},p_0}
      \bigg [ (T-u)^{-\frac{1}{2}}
               \Big( \equ  |M_T-M_u|^{p_0}\Big)^{\frac{1}{p_0}} \\
&   &  \hspace*{18em} + (T-u)\Big(\equ |M_T|^{p_0}\Big)^{\frac{1}{p_0}}\bigg ] \\
&\le& \|\sigma\|_\infty c_{\eqref{eqn:lemma:H1},p_0}
      \bigg [ (T-u)^{-\frac{1}{2}}
             2 \Big( \equ  |M_T-M_r|^{p_0}\Big)^{\frac{1}{p_0}} \\
&   &  \hspace*{7em}
      + (T-u)\Big(\equ |M_T-M_r|^{p_0}\Big)^{\frac{1}{p_0}}
      + (T-u) |M_r|
          \bigg ] \\
&\le& \|\sigma\|_\infty c_{\eqref{eqn:lemma:H1},p_0}
      \bigg [ [2 + T^\frac{3}{2}](T-u)^{-\frac{1}{2}} 
               \Big( \equ  |M_T-M_r|^{p_0}\Big)^{\frac{1}{p_0}}
     + (T-u)|M_r|\bigg ] \\
&\le& \|\sigma\|_\infty c_{\eqref{eqn:lemma:H1},p_0}
      [2 + T^\frac{3}{2}+T] \bigg [ (T-t)^{-\frac{1}{2}} 
               \Big( \equ  |M_T-M_r|^{p_0}\Big)^{\frac{1}{p_0}}
     + |M_r|\bigg ].  
\tion
Letting $c:= {e^{T \|k\|_\infty}}\|\sigma\|_\infty c_{\eqref{eqn:lemma:H1},p_0}
         [2 + T^\frac{3}{2}+T]$ we conclude the proof by using the Burkholder-Davis-Gundy inequalities
in order to get
\equa
&    & \frac{1}{a_p}\left ( \E^{\cF_r}_\Q | M_t - M_r |^p \right )^\frac{1}{p} \\
&\le& \left ( \E^{\cF_r}_\Q \kla \int_r^t |\lambda_u|^2 du \mer^\frac{p}{2} \right )^\frac{1}{p} \\
&\le& c \bigg [  
            (T-t)^{-\frac{1}{2}} 
            \kla \E^{\cF_r}_\Q \kla \int_r^t  \Big( \equ  |M_T-M_r|^{p_0}\Big)^{\frac{2}{p_0}} du \mer^\frac{p}{2} 
             \mer^\frac{1}{p} \\
&   & \hspace*{22em}                  + \sqrt{t-r} |M_r| \bigg ] \\
&\le& c \left [  
            \sqrt{\frac{t-r}{T-t}}
            \kla \E^{\cF_r}_\Q \Big (  \sup_{u\in [r,t]} \equ  |M_T-M_r|^{p_0}\Big)^{\frac{p}{p_0}}  
             \mer^\frac{1}{p} 
                    + \sqrt{t-r} |M_r| \right ] \\
&\le& c \bigg [  \kla \frac{p/p_0}{(p/p_0)-1}\mer^\frac{1}{p_0}
            \sqrt{\frac{t-r}{T-t}}
            \kla \E^{\cF_r}_\Q \Big (  \eqt  |M_T-M_r|^{p_0}\Big)^{\frac{p}{p_0}}  
             \mer^\frac{1}{p} \\
&   & \hspace*{22em}  + \sqrt{t-r} |M_r| \bigg ] \\
&\le& c \left [  \kla \frac{p}{p-p_0}\mer^\frac{1}{p_0}
            \sqrt{\frac{t-r}{T-t}}
            \kla \E^{\cF_r}_\Q  \eqt  |M_T-M_r|^p  
             \mer^\frac{1}{p} 
                    + \sqrt{t-r} |M_r| \right ].
\tion
\end{proof}

\begin{lemma}\label{lemma:H2_new}
For $p\in (1,\infty)$ there is a constant $c_\eqref{eqn:lemma:H2_new}=c(\sigma,b,k,p)>0$
such that one has a.s. that
\begin{equation}\label{eqn:lemma:H2_new}
         |D^2 v(r,X_r)| \le \\ c_\eqref{eqn:lemma:H2_new} \bigg [ 
     \frac{\kla \E_\Q^{\cF_r} \bet g(X_T) - \E_\Q^{\cF_r}g(X_T)\rag^p \mer^\frac{1}{p}}
          {T-r}
        +  \sqrt{T-r}
      \left ( \E^{\cF_r}_\Q |M^*|^{p} \right )^\frac{1}{p}
      \bigg ].
\end{equation}
The same estimate holds true
if the measure $\Q$ is replaced by the measure
$\P$ with $\lambda\in A_{\alpha}(\Q)$ and $\alpha \in (1,p)$, where the 
constant $c_\eqref{eqn:lemma:H2_new}>0$ might additionally depend on $\Q$ (and therefore implicitly 
on $\alpha$).
\end{lemma}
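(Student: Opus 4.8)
\emph{Plan.} I would follow the pattern of the proof of Lemma~\ref{lemma:H1}: first establish \eqref{eqn:lemma:H2_new} under $\Q$ for \emph{every} exponent $p\in(1,\infty)$, and then deduce the version under $\P$ literally as there --- fixing $q\in(1,p)$, choosing $r$ large and the associated $\beta$ with $1=(1/\alpha)+(1/\beta)$, invoking Lemma~\ref{lemma:1} (with $p$ replaced by $p/q$) together with \eqref{eq:lp} to pass from $\Q$-conditional $L_q$-norms to $\P$-conditional $L_p$-norms, and letting $\alpha$ run through $(1,p)$. So the task reduces to the $\Q$-estimate; fix $p\in(1,\infty)$ and set $q:=(1+p)/2\in(1,p)$.

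\emph{Reduction to the potential-free case by Duhamel.} Write $P_{s,t}h(x):=\int_{\R^d}\Gamma_X(s,x;t,\xi)\,h(\xi)\,d\xi$ for the transition operator attached to the density $\Gamma_X$ of Proposition~\ref{proposition:properties_Gamma} (i.e.\ the case $k=0$), and recall $m(t,x)=v(t,x)k(t,x)$. The Feynman--Kac / variation-of-constants formula gives
\[ v(r,\cdot)=P_{r,T}g+\int_r^T P_{r,u}\big(m(u,\cdot)\big)\,du . \]
I would differentiate this twice in $x$. The one genuinely delicate point is that this termwise differentiation is legitimate: the Gaussian bound of Proposition~\ref{proposition:properties_Gamma} only yields $|D^2_x [P_{r,u}(m(u,\cdot))](x)|\le c(u-r)^{-1}\exp(c|x|^{\kappa_g})$, which is not $du$-integrable near $u=r$; but combining the cancellations $\int\Gamma_X(r,x;u,\xi)\,d\xi=1$, $\int\nabla_x\Gamma_X(r,x;u,\xi)\,d\xi=0$ and $\int D^2_x\Gamma_X(r,x;u,\xi)\,d\xi=0$ with a second-order Taylor expansion of $m(u,\cdot)$ about $x$ (using $|D^a_x v(u,\cdot)|\le c(T-u)^{-|a|/2}\exp(c|\cdot|^{\kappa_g})$) shows that $u\mapsto D^2_x [P_{r,u}(m(u,\cdot))](x)$ stays bounded on $(r,T)$. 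Granting this, we obtain a.s.
\[ D^2v(r,X_r)=D^2\big(P_{r,T}g\big)(X_r)+\int_r^T D^2\big(P_{r,u}m(u,\cdot)\big)(X_r)\,du . \]

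\emph{Term-by-term Malliavin-weight bound.} Using the Markov property I apply Lemma~\ref{lemma:Malliavin_weights} conditionally on $\cF_r$ (with $Z\equiv X$ started at $X_r$), at the exponent $q$. For the first term ($h=g$, terminal time $T$),
\[ |D^2(P_{r,T}g)(X_r)|\le \frac{\kappa_q}{T-r}\Big(\E^{\cF_r}_\Q|g(X_T)-\E^{\cF_r}_\Q g(X_T)|^q\Big)^{1/q}\le \frac{\kappa_q}{T-r}\Big(\E^{\cF_r}_\Q|g(X_T)-\E^{\cF_r}_\Q g(X_T)|^p\Big)^{1/p}, \]
by conditional Jensen ($q<p$) --- this is the first term of \eqref{eqn:lemma:H2_new}. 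For the integrand ($h=m(u,\cdot)$, which satisfies (C3) since $|m(u,x)|\le c\|k\|_\infty\exp(c|x|^{\kappa_g})$, terminal time $u$), Lemma~\ref{lemma:Malliavin_weights} gives $|D^2(P_{r,u}m(u,\cdot))(X_r)|\le\kappa_q(u-r)^{-1}(\E^{\cF_r}_\Q|m(u,X_u)-\E^{\cF_r}_\Q m(u,X_u)|^q)^{1/q}$, and Lemma~\ref{lemma:m_M} (with $p_0=q$) followed by Lemma~\ref{lemma:proportional_estimate} (exponent $q$) bound the numerator by $c\,[\,\sqrt{u-r}\,(\E^{\cF_r}_\Q|M^*|^p)^{1/p}+\sqrt{(u-r)/(T-u)}\,(\E^{\cF_r}_\Q|M_T-M_r|^q)^{1/q}+\sqrt{u-r}\,|M_r|\,]$. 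Dividing by $u-r$ and integrating over $u\in(r,T)$, using $\int_r^T(u-r)^{-1/2}du=2\sqrt{T-r}$ and $\int_r^T((u-r)(T-u))^{-1/2}du=\pi$, the integral term is
\[ \le c\Big[\sqrt{T-r}\,(\E^{\cF_r}_\Q|M^*|^p)^{1/p}+(\E^{\cF_r}_\Q|M_T-M_r|^q)^{1/q}+\sqrt{T-r}\,|M_r|\Big]. \]

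\emph{Collecting terms.} Since $M_r$ is $\cF_r$-measurable and $|M_r|\le M^*$, one has $|M_r|\le(\E^{\cF_r}_\Q|M^*|^p)^{1/p}$, so the last term is absorbed. For the middle term, $M_T-M_r=K_T^X g(X_T)-\E^{\cF_r}_\Q(K_T^X g(X_T))$: when $T-r\le1$ I split the product as in Remark~\ref{remark:thm_equivalence}(5) with $U=K_T^X$, $V=g(X_T)$, using $|U-\E^{\cF_r}_\Q U|\le c(T-r)$, $\|U\|_\infty\le e^{\|k\|_\infty T}$ and $|g(X_T)|=(K_T^X)^{-1}|M_T|\le e^{\|k\|_\infty T}M^*$, to get $(\E^{\cF_r}_\Q|M_T-M_r|^q)^{1/q}\le c[(T-r)(\E^{\cF_r}_\Q|M^*|^p)^{1/p}+(\E^{\cF_r}_\Q|g(X_T)-\E^{\cF_r}_\Q g(X_T)|^p)^{1/p}]$, which (as $T-r\le1$) is dominated by the right-hand side of \eqref{eqn:lemma:H2_new}; when $T-r>1$ I simply use $(\E^{\cF_r}_\Q|M_T-M_r|^q)^{1/q}\le2(\E^{\cF_r}_\Q|M^*|^p)^{1/p}\le2\sqrt{T-r}(\E^{\cF_r}_\Q|M^*|^p)^{1/p}$. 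Adding up the two terms gives \eqref{eqn:lemma:H2_new} under $\Q$, and the first paragraph then produces it under $\P$. The main obstacle is the step flagged in the second paragraph --- the parabolic regularity estimate that keeps $D^2_x[P_{r,u}(m(u,\cdot))](x)$ bounded as $u\downarrow r$, so that the Duhamel formula may be differentiated twice under the integral; everything else is bookkeeping with Lemmas~\ref{lemma:Malliavin_weights}, \ref{lemma:m_M} and \ref{lemma:proportional_estimate}.
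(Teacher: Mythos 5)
Your proposal is correct and takes essentially the same route as the paper: the same Duhamel splitting of $v$ into a potential-free terminal term and the time-integral of $P_{r,u}\big(m(u,\cdot)\big)$, the same use of Lemma~\ref{lemma:Malliavin_weights} conditionally at $X_r$, of Lemmas~\ref{lemma:m_M} and \ref{lemma:proportional_estimate} with the intermediate exponent $(1+p)/2$, of the Remark~\ref{remark:thm_equivalence}(5) product trick to pass between $M_T-M_r$ and $g(X_T)-\E^{\cF_r}_\Q g(X_T)$, and the same reduction of the $\P$-statement to the $\Q$-statement as in the first step of the proof of Lemma~\ref{lemma:H1}. The only divergence is the justification of interchanging $D^2$ with the time-integral: the paper truncates at $T_0<T$, where the Malliavin-weight bound combined with the conditional moment estimates gives $|D^2 v^t(r,x)|\le \gamma\,(t-r)^{-1/2}e^{\gamma|x|^{\kappa_g}}$ with a $T_0$-dependent constant, and then sends $T_0\uparrow T$ via \eqref{eqn:L_p-convergence_to_maturity}, which sidesteps your Taylor-cancellation step; your direct argument on $[r,T]$ is an acceptable substitute, although the claimed boundedness of $u\mapsto D^2_x[P_{r,u}m(u,\cdot)](x)$ near $u=r$ is slightly optimistic for a general fundamental solution (the first-order moment of $D^2_x\Gamma_X$ need not vanish exactly), and what is actually needed and readily available --- e.g.\ from the very bounds of order $(u-r)^{-1/2}+((u-r)(T-u))^{-1/2}$ you derive, applied pointwise in $x$ --- is a locally uniform integrable majorant.
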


\begin{proof}
(a) The statement for $\P$ for $p\in (1,\infty)$ can be deduced from the statement for 
$\Q$ for $q \in (1,p)$ as in the first step of the proof of Lemma \ref{lemma:H1}.
\smallskip

(b) Now we show the estimate for the measure $\Q$.
For $0\le s \le t \le T$, a fixed $T_0\in (0,T)$ and $r\in [0,T_0]$
we let
\[ v^t(s,x) :=  \E_\Q \kla m(t,X_t) | X_s=x   \mer
   \sptext{.5}{and}{.5}
   v_h(r,x) :=   \E_\Q \kla v(T_0,X_{T_0})   | X_r = x \mer. \]
It\^o's formula applied to $v$ gives for $r\in [0,T_0]$ that
\[ v (r,x) = \E_\Q  \kla  v(T_0,X_{T_0})
                + \int_r^{T_0} (kv)(t,X_t) dt | X_r = x \mer \]
and therefore
\[ v(r,x)  =  v_h(r,x) + \int_r^{T_0} v^t(r,x) dt. \]
Using Lemma \ref{lemma:Malliavin_weights} and the arguments from Remark \ref{remark:thm_equivalence}(5)
one can show for $0\le r < t \le T_0<T$ that
\begin{equation}\label{eqn:estimates_v^t}
   |\nabla v^t(r,x)| \le \gamma e^{\gamma |x|^{k_g}}
   \sptext{1}{and}{1}
   |D^2 v^t(r,x)| \le \frac{\gamma}{\sqrt{t-r}} e^{\gamma |x|^{k_g}},
\end{equation}
where $\gamma>0$ depends at most on $(\sigma,b,k,K_g,k_g,T_0)$. From this
we deduce that 
\[
 D^2 v(r,x)  =  D^2 v_h(r,x) + \int_r^{T_0} D^2 v^t(r,x) dt
\]
where \eqref{eqn:estimates_v^t} are used to interchange the integral and $D^2$.
For $p_0:= \frac{1+p}{2}$, $0\le r < t \le T$ and $s\in [0,T_0)$ we again use
Lemma \ref{lemma:Malliavin_weights} to get
\equa
 |D^2 v^t (r,X_r)| &\le& \frac{\kappa_{p_0}}{(t-r)} 
                          \kla \E_\Q^{\cF_r} \bet m(t,X_t) - \E_\Q^{\cF_r}{m}(t,X_t) 
                          \rag^{p_0}
                          \mer^\frac{1}{p_0} \quad \mbox{a.s.}, \\
|D^2 v_h (s,X_s)| &\le& \frac{\kappa_p}{(T_0-s)} 
                          \kla \E_\Q^{\cF_s} \bet v(T_0,X_{T_0}) - \E_\Q^{\cF_s} v(T_0,X_{T_0}) 
                          \rag^p \mer^\frac{1}{p}\quad \mbox{a.s.}
\tion
From the first estimate we derive by Lemmas \ref{lemma:m_M} and \ref{lemma:proportional_estimate}
(with $p$ replaced by $p_0$) a.s. that
\equa
&   & |D^2 v^t (r,X_r)| \\
&\le& \frac{\kappa_{p_0}}{(t-r)} 
                          \kla \E_\Q^{\cF_r} \bet m(t,X_t) - \E_\Q^{\cF_r}{m}(t,X_t) 
                          \rag^{p_0}
                          \mer^\frac{1}{p_0} \\
&\le& \frac{\kappa_{p_0} c_{\eqref{eqn:lemma:m_M}}}{(t-r)}             
      \left [ \sqrt{t-r} \left ( \E^{\cF_r}_\Q |M^*|^p \right )^\frac{1}{p}
                + \left ( \E^{\cF_r}_\Q | M_t - M_r |^{p_0} 
                  \right )^\frac{1}{p_0} \right ]  \\
&\le& \kappa_{p_0} c_{(\ref{eqn:lemma:m_M})}[ 1  + c_{(\ref{eqn:lemma:proportional_estimate})}
         ]   \frac{1}{\sqrt{t-r}}
      \left ( \E^{\cF_r}_\Q |M^*|^p \right )^\frac{1}{p} \\
&   & +  \kappa_{p_0}  c_{(\ref{eqn:lemma:m_M})}  c_{(\ref{eqn:lemma:proportional_estimate})}
        \frac{1}{\sqrt{T-t}\sqrt{t-r}}
         \left ( \E^{\cF_r}_\Q | M_T - M_r |^{p_0} \right )^\frac{1}{p_0}
 \tion
and
\[     \int_r^T |D^2 v^t(r,X_r)| dt
   \le   c \left [ \sqrt{T-r}
      \left ( \E^{\cF_r}_\Q |M^*|^p \right )^\frac{1}{p} 
  + 
         \left ( \E^{\cF_r}_\Q | M_T - M_r |^p \right )^\frac{1}{p}\right ] \]
with $c:= \kappa_{p_0} c_{(\ref{eqn:lemma:m_M})} 
          \max \{ 2 + 2 c_{(\ref{eqn:lemma:proportional_estimate})}, 
                  c_{(\ref{eqn:lemma:proportional_estimate})}
                  {\mathrm{Beta}}(\frac{1}{2},\frac{1}{2}) \}$. 
The second estimate yields by $T_0\uparrow T$ and (\ref{eqn:L_p-convergence_to_maturity}) that
\[ |D^2 v_h (r,X_r)| \le \frac{\kappa_p}{(T-r)} 
                          \kla \E_\Q^{\cF_r} \bet g(X_T)- \E_\Q^{\cF_r} g(X_T)
                          \rag^p \mer^\frac{1}{p} \]
and the upper bound is independent of $T_0$. Combining the estimates with
 \begin{multline*}
    \kla \E^{\cF_r}_\Q | M_T - M_r |^p \right )^\frac{1}{p} 
    \le 2 e^{\|k\|_\infty T} \\
         \left [ \| k\|_\infty (T-r) e^{\|k\|_\infty T} \left ( \E^{\cF_r}_\Q |M^*|^p \right )^\frac{1}{p} +   \kla \E_\Q^{\cF_r} \bet g(X_T) - \E_\Q^{\cF_r} g(X_T) \rag^p \mer^\frac{1}{p} \right ]
\end{multline*}
using the arguments from Remark \ref{remark:thm_equivalence}(5) the proof is complete.
\end{proof}
\medskip

\begin{lemma}\label{lemma:upper_bound_diffenerence_conditional_expectations}
Let $\lambda = \mathcal{E}(Y)$, where $Y$ is a {\rm BMO}-martingale with $Y_0=0$.
Then, for $p\in (1,\infty)$, $t\in[0,T]$ and  
$c_\eqref{eqn:lemma:upper_bound_diffenerence_conditional_expectations}:= 2 b_p(\P)e^{T\|k\|_\infty}\|\sigma\|_\infty $
we have that
\begin{equation}
\label{eqn:lemma:upper_bound_diffenerence_conditional_expectations}
      \| M_T - \ept M_T \|_{L_p(\P)}
   \le c_\eqref{eqn:lemma:upper_bound_diffenerence_conditional_expectations} 
       \noo \Big( \int_t^T |\nabla v(s,X_s)|^2ds  \Big)^\frac{1}{2} \rrm_{L_p(\P)}\!.
\end{equation}
\end{lemma}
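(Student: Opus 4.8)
First, if the right-hand side of \eqref{eqn:lemma:upper_bound_diffenerence_conditional_expectations} is infinite there is nothing to prove, and the case $t=T$ is trivial, so I may assume $t\in[0,T)$ and $\big(\int_t^T|\nabla v(s,X_s)|^2\,ds\big)^{1/2}\in L_p(\P)$; in particular $\int_t^T|\nabla v(s,X_s)|^2\,ds<\infty$ a.s. Recall from It\^o's formula that $M_s=v(0,x_0)+N_s$ on $[0,T)$, where $N_s:=\int_0^s K_r^X\nabla v(r,X_r)\sigma(r,X_r)\,dB_r$. Since $\nabla v$ is bounded on $[0,s]\times\R^d$ up to the Gaussian tails of $X$, the restriction of $N$ to each $[0,s]$ with $s<T$ is a true $\Q$-martingale, and the $L_1(\Q)$-convergence $M_s\to M_T$ then upgrades $M$, hence $N$, to a true $\Q$-martingale on the closed interval $[0,T]$. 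As $v(0,x_0)$ is constant, $M_T-\ept M_T=N_T-\ept N_T$, so it remains to estimate the latter.

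The plan is to realise $N_T-\ept N_T$ as the terminal value of a $\Q$-martingale started at $0$ and then to invoke Proposition~\ref{proposition:BMO-exponential}. I would set $\bar N_s:=N_{s\vee t}-N_t$ for $s\in[0,T]$. A direct verification shows that $\bar N$ is a $\Q$-martingale with $\bar N_0\equiv 0$, that $\bar N_T=N_T-N_t$, and that
\[ \langle\bar N\rangle_T=\int_t^T\big|K_r^X\nabla v(r,X_r)\sigma(r,X_r)\big|^2\,dr. \]
Since $N_t$ is $\cF_t$-measurable, $\ept\bar N_T=\ept N_T-N_t$, whence $N_T-\ept N_T=\bar N_T-\ept\bar N_T$. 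Applying the left-hand inequality of \eqref{eq:lp} with $Z=\bar N_T$, $\cG=\cF_t$ and the competitor $Z'=0\in L_p(\Omega,\cF_t,\P)$ then gives
\[ \|M_T-\ept M_T\|_{L_p(\P)}=\|\bar N_T-\ept\bar N_T\|_{L_p(\P)}\le 2\,\|\bar N_T\|_{L_p(\P)}\le 2\,\|\bar N_T^*\|_{L_p(\P)}, \]
where $\bar N_T^*:=\sup_{s\in[0,T]}|\bar N_s|$.

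Finally, Proposition~\ref{proposition:BMO-exponential} applied to the $\Q$-martingale $\bar N$ yields $\|\bar N_T^*\|_{L_p(\P)}\le b_p(\P)\,\|\sqrt{\langle\bar N\rangle_T}\,\|_{L_p(\P)}$, while the bounds $K_r^X\le e^{T\|k\|_\infty}$ together with $|\nabla v(r,X_r)\sigma(r,X_r)|\le\|\sigma\|_\infty\,|\nabla v(r,X_r)|$ (the Hilbert--Schmidt norm is submultiplicative) give
\[ \sqrt{\langle\bar N\rangle_T}\le e^{T\|k\|_\infty}\|\sigma\|_\infty\Big(\int_t^T|\nabla v(r,X_r)|^2\,dr\Big)^{1/2}. \]
Chaining the last three displays produces \eqref{eqn:lemma:upper_bound_diffenerence_conditional_expectations} with $c_\eqref{eqn:lemma:upper_bound_diffenerence_conditional_expectations}=2\,b_p(\P)\,e^{T\|k\|_\infty}\|\sigma\|_\infty$. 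The one place where care is needed is the upgrade of $N$ to a genuine $\Q$-martingale on the closed interval $[0,T]$, which is what makes Proposition~\ref{proposition:BMO-exponential} (phrased for true $\Q$-martingales) legitimately applicable to $\bar N$; the remaining steps are routine.
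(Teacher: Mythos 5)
Your proof is correct and follows essentially the same route as the paper's: reduce $\|M_T-\ept M_T\|_{L_p(\P)}$ via \eqref{eq:lp} to (twice) the increment $\|M_T-M_t\|_{L_p(\P)}$, apply Proposition \ref{proposition:BMO-exponential} to the shifted $\Q$-martingale, and bound the bracket pointwise using $K^X_r\le e^{T\|k\|_\infty}$ and $|\nabla v\,\sigma|\le\|\sigma\|_\infty|\nabla v|$. The only differences are cosmetic (using $Z'=0$ for $\bar N_T$ rather than $Z'=M_t$ for $M_T$, and spelling out the true-martingale property of $M$, which the paper takes as standing knowledge).
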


\begin{proof}
Owing to inequality \eqref{eq:lp} and applying Proposition \ref{proposition:BMO-exponential}, we get
\equa
       \| M_T - \ept M_T \|_{L_p(\P)} 
&\le& 2 \| M_T - M_t \|_{L_p(\P)} \\
&\le& 2 b_p(\P) \left \| \sqrt{\langle M \rangle_T - \langle M \rangle_t} \right \|_{L_p(\P)} \\
& = & 2 b_p(\P) \noo\sqrt{\int_t^T |K^X_s\nabla v(s,X_s) \sigma(s,X_s)|^2ds} \rrm_{L_p(\P)}.
\tion
\end{proof}
\medskip

\begin{lemma}\label{lemma:upper_bound_nabla}
For $p\in [2,\infty)$, $\lambda_T \in A_\alpha(\Q)$ with $\alpha\in (1,p)$,
$0\le s < t < T$ and $l=1,...,d$ we have that 
\begin{align}
\nonumber   &   \noo K^X_t \partial_{x_l} v(t,X_t) - K^X_s \partial_{x_l} v(s,X_s) \rrm_{L_p(\P)} \\
&\label{eqn:lemma:upper_bound_nabla}   
\le c_\eqref{eqn:lemma:upper_bound_nabla} \Big [  \| M_T \|_{L_p(\P)} \int_s^t\frac{ dr }{\sqrt{ T-r }} + \Big( \int_s^t \| D^2 v(r,X_r)\|^2_{L_p(\P)} dr \Big)^\frac{1}{2}  
       \Big ] 
\end{align}
with
$ c_\eqref{eqn:lemma:upper_bound_nabla} >0$ depending at most on $(\sigma,b,k,p,\P)$ (and therefore implicitly 
on $\alpha$).
\end{lemma}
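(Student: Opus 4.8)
The plan is to apply It\^o's formula to the process $r\mapsto K^X_r\partial_{x_l}v(r,X_r)$ on the interval $[s,t]\subset[0,T)$ (where $v$ and hence $\partial_{x_l}v$ are smooth enough, $t<T$), take $L_p(\P)$-norms of the resulting increment, and estimate the drift and martingale parts separately using the gradient PDE \eqref{eqn:pde_for_gradient}, Proposition \ref{proposition:BMO-exponential}, Lemma \ref{lemma:H1} and \eqref{eq:lemma1:bis}.

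First I would compute the semimartingale decomposition. Writing $w=\nabla v$ and $w_l=\partial_{x_l}v$, and using that $K^X_r=\exp(\int_0^r k(u,X_u)\,du)$ has bounded variation, the product rule together with It\^o's formula for $w_l(r,X_r)$ gives
\[ K^X_t w_l(t,X_t)-K^X_s w_l(s,X_s)=\int_s^t K^X_r\Big[kw_l+\partial_t w_l+\tfrac12\langle A,D^2 w_l\rangle+\langle b,(\nabla w_l)^\top\rangle\Big](r,X_r)\,dr+\int_s^t K^X_r\,\nabla w_l(r,X_r)\sigma(r,X_r)\,dB_r. \]
Substituting the PDE \eqref{eqn:pde_for_gradient} for $w_l$ and using $\partial_{x_l}(vk)=w_l k+v\,\partial_{x_l}k$, the bracket in the drift collapses to $-v\,\partial_{x_l}k-\tfrac12\langle\partial_{x_l}A,D^2 v\rangle-\langle\partial_{x_l}b,w^\top\rangle$, which by (C1) is pointwise bounded by $c(\sigma,b,k)e^{T\|k\|_\infty}\big[\,|v(r,X_r)|+|D^2 v(r,X_r)|+|\nabla v(r,X_r)|\,\big]$. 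The martingale integrand obeys $|K^X_r\nabla w_l(r,X_r)\sigma(r,X_r)|\le e^{T\|k\|_\infty}\|\sigma\|_\infty\,|D^2 v(r,X_r)|$, since $\nabla w_l$ is the $l$-th row of the Hessian $D^2 v$.

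Next I would take $L_p(\P)$-norms. For the stochastic integral, I would apply Proposition \ref{proposition:BMO-exponential} to the $\Q$-martingale $N_u:=\int_0^u\mathbf 1_{(s,t]}(r)K^X_r\nabla w_l(r,X_r)\sigma(r,X_r)\,dB_r$ (a genuine martingale because on $[s,t]$ with $t<T$ the Hessian is bounded by $c(T-t)^{-1}\exp(c|X_r|^{\kappa_g})$ and $\sup_{r\le T}\exp(c|X_r|^{\kappa_g})\in L_q(\Q)$ for all $q$ by the Gaussian tail bound on $X$), and then use Minkowski's integral inequality, valid since $p\ge 2$, to get $\|\int_s^t K^X_r\nabla w_l\sigma\,dB_r\|_{L_p(\P)}\le b_p(\P)e^{T\|k\|_\infty}\|\sigma\|_\infty\big(\int_s^t\|D^2 v(r,X_r)\|^2_{L_p(\P)}\,dr\big)^{1/2}$. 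For the drift I would bound $\|\int_s^t\mathrm{drift}\,dr\|_{L_p(\P)}\le\int_s^t\|\mathrm{drift}(r,X_r)\|_{L_p(\P)}\,dr$ and treat the three pieces: the $D^2v$ piece by Cauchy--Schwarz, $\int_s^t\|D^2 v(r,X_r)\|_{L_p(\P)}\,dr\le\sqrt T\,\big(\int_s^t\|D^2 v(r,X_r)\|^2_{L_p(\P)}\,dr\big)^{1/2}$; the $v$ piece using $|v(r,X_r)|\le e^{T\|k\|_\infty}|M_r|$ with $M_r=\E^{\cF_r}_\Q M_T$ and \eqref{eq:lemma1:bis}, so $\|v(r,X_r)\|_{L_p(\P)}\le c\,\|M_T\|_{L_p(\P)}$; and the $\nabla v$ piece via Lemma \ref{lemma:H1} in its $\P$-version (available since $\lambda_T\in A_\alpha(\Q)$ with $\alpha\in(1,p)$), taking $L_p(\P)$-norms there and using the tower property together with $\|M_T-\ept M_T\|_{L_p(\P)}\le 2\|M_T\|_{L_p(\P)}$ to obtain $\|\nabla v(r,X_r)\|_{L_p(\P)}\le c\,\|M_T\|_{L_p(\P)}(T-r)^{-1/2}$ (absorbing $(T-r)\le T^{3/2}(T-r)^{-1/2}$). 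Finally, the elementary inequalities $t-s\le\sqrt T\int_s^t(T-r)^{-1/2}\,dr$ and $\sqrt{t-s}\le\sqrt T$ show that all lower-order contributions are dominated by $c\,\|M_T\|_{L_p(\P)}\int_s^t(T-r)^{-1/2}\,dr$, which together with the martingale estimate yields \eqref{eqn:lemma:upper_bound_nabla}.

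The hard part will be the drift computation: one must invoke the gradient PDE \eqref{eqn:pde_for_gradient} correctly and notice that the a priori singular term $\partial_{x_l}(vk)$ combines with $kw_l$ so that only the mild term $-v\,\partial_{x_l}k$ survives; consequently the drift contains no uncontrolled blow-up and involves precisely the quantities $v$, $\nabla v$ and $D^2 v$ for which we already have good $L_p(\P)$-control. The remaining steps (Proposition \ref{proposition:BMO-exponential}, the $\P$-version of Lemma \ref{lemma:H1}, \eqref{eq:lemma1:bis}, Minkowski and the time-integral estimates) are routine and follow the pattern of Lemma \ref{lemma:upper_bound_diffenerence_conditional_expectations}.
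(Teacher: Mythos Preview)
Your proposal is correct and follows essentially the same route as the paper: apply It\^o's formula to $K^X_r\partial_{x_l}v(r,X_r)$, use the gradient PDE \eqref{eqn:pde_for_gradient} so that the $kw_l$ term cancels against part of $\partial_{x_l}(vk)$ leaving only $-v\,\partial_{x_l}k$, bound the stochastic integral via Proposition \ref{proposition:BMO-exponential} followed by Minkowski for $p\ge 2$, and estimate the three drift pieces using \eqref{eq:lemma1:bis} for $v$, the $\P$-version of Lemma \ref{lemma:H1} for $\nabla v$, and Cauchy--Schwarz for $D^2v$. The paper's proof is terser (it writes ``inserting these estimates \ldots\ gives the announced result'' without spelling out Minkowski, Cauchy--Schwarz or the absorption of the $(t-s)$ and $(T-r)$ factors into $\int_s^t (T-r)^{-1/2}\,dr$), but the argument is the same.
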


\begin{proof}
Exploiting (\ref{eqn:pde_for_gradient}) and Propositions \ref{proposition:RH-A-BMO} and \ref{proposition:BMO-exponential} we get that
\equa
&   &
        \noo K^X_t \partial_{x_l} v(t,X_t) - K^X_s\partial_{x_l} v(s,X_s) \rrm_{L_p(\P)} \\
&\le&   \noo \int_s^t K^X_r (\nabla \partial_{x_l} v)(r,X_r) \sigma(r,X_r) dB_r \rrm_{L_p(\P)} \\
&   & + \bigg \|  \int_s^t K^X_r
        \bigg [  \frac{1}{2} \left | \langle \partial_{x_l}A(r,X_r),D^2(r,X_r)\rangle \right |
       + \left | \langle \partial_{x_l}b(r,X_r),\nabla v(r,X_r)^\top \rangle \right |   \\
&   &  \hspace*{15em}  + | (\partial_{x_l} k)(r,X_r) v(r,X_r)| \bigg ]  
           dr \bigg \|_{L_p(\P)} \\
&\le& b_p(\P) \noo \kla \int_s^t |K^X_r(\nabla \partial_{x_l} v)(r,X_r) \sigma(r,X_r)|^2 dr \mer^\frac{1}{2} 
      \rrm_{L_p(\P)} \\
&   & + \frac{1}{2} \| \partial_{x_l} A \|_\infty 
        \noo \int_s^t |K^X_r D^2 v(r,X_r) | dr \rrm_{L_p(\P)} \\
&   &  + \| \partial_{x_l} b \|_\infty 
        \noo \int_s^t |K^X_r \nabla v(r,X_r)| dr \rrm_{L_p(\P)} \\
&   & +  \| \partial_{x_l} k\|_\infty 
        \noo \int_s^t |K^X_r v(r,X_r)| dr \rrm_{L_p(\P)}.
\tion 
Inequality (\ref{eq:lemma1:bis}) directly yields 
\[    \sup_{r\in[0,T]} \noo K_r^X  v(r,X_r) \rrm_{L_p(\P)}
    = \sup_{r\in[0,T]} \noo \E^{\cF_r}_\Q M_T \rrm_{L_p(\P)}
   \le c_\eqref{eqn:lemma:1} \|M_T\|_{L_p(\P)}. \]
Moreover, by Lemma \ref{lemma:H1},
\[ \| \nabla v(r,X_r)\|_{L_p(\P)} 
   \leq c_{\eqref{eqn:lemma:H1}} (T-r)^{-\frac{1}{2}} \big( 2 +T^{3/2}\big)\|M_T\|_{L_p(\P)}.\]
Inserting these estimates in the above upper bound for 
\[ \noo K^X_t \partial_{x_l} v(t,X_t) - K^X_s\partial_{x_l} v(s,X_s) \rrm_{L_p(\P)}\]
gives the announced result.
\end{proof}

\begin{lemma}[{\cite[Proposition A.4]{geis:toiv:12}}]
\label{lemma:general_interpol}
Let $0 < \theta < 1$, $2 \leq q \leq \infty$ and
$d^k:[0,T)\to [0,\infty)$, $k=0,1,2$, be  measurable functions.
Assume that there are $A\ge 0$ and $D\ge 1$ such that
\equa
      \frac{1}{D} (T-t)^{\frac{k}{2}} d^k (t) 
&\le& d^0(t)
 \le D \kla \int_t^T [d^1(s)]^2 ds \mer^\frac{1}{2}, \\
      d^1(t) 
&\le& A + D \kla \int_0^t [d^2(u)]^2 du \mer^\frac{1}{2}
\tion
for $k=1,2$ and $t\in [0,T)$. Then there is a constant
$c_{(\ref{eqn:general_interpol})} >0$, depending at most on $(D,\theta,q,T)$,
such that, for $k,l\in \{ 0,1,2\}$,
\begin{equation}\label{eqn:general_interpol} 
        A + \Phi_q \left ( 
         (T-t)^{\frac{k-\theta}{2}} d^k(t) \right )
\sim_{c_{(\ref{eqn:general_interpol})}}
       A + \Phi_q \left ( (T-t)^{\frac{l-\theta}{2}} d^l(t) 
          \right ).
\end{equation}
\end{lemma}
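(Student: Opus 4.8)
The plan is to show that the three quantities $I_k := A + \Phi_q\big((T-t)^{(k-\theta)/2} d^k(t)\big)$, $k=0,1,2$, are pairwise equivalent with a constant depending only on $(D,\theta,q,T)$, by closing a cycle of inequalities. The easy half is that $I_0$ dominates the other two: from $\frac1D(T-t)^{k/2}d^k(t)\le d^0(t)$ one has the pointwise bound $(T-t)^{(k-\theta)/2}d^k(t)\le D\,(T-t)^{-\theta/2}d^0(t)$ for $k=1,2$, and since $\Phi_q$ is a genuine norm (as $q\ge 2$) and $D\ge 1$ this yields $I_1\le D\,I_0$ and $I_2\le D\,I_0$.

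For the reverse bounds I would reduce everything, via the substitution $u=T-t$ (so that $\Phi_q(F)=\|F(T-\cdot)\|_{L_q((0,T],\frac{du}{u})}$ and $\Phi_q(F)^2=\big\||F(T-\cdot)|^2\big\|_{L_{q/2}((0,T],\frac{du}{u})}$ with $q/2\ge 1$), to two one-dimensional Hardy estimates for nonnegative $\phi$ and $m\ge 1$:
\[ \Big\|\,u\mapsto u^{-\theta}\!\int_0^u r^{\theta-1}\phi(r)\,dr\,\Big\|_{L_m((0,T],\frac{du}{u})}\le \tfrac1\theta\,\|\phi\|_{L_m((0,T],\frac{du}{u})} \]
and
\[ \Big\|\,r\mapsto r^{1-\theta}\!\int_r^T s^{\theta-2}\phi(s)\,ds\,\Big\|_{L_m((0,T],\frac{dr}{r})}\le \tfrac1{1-\theta}\,\|\phi\|_{L_m((0,T],\frac{dr}{r})}. \]
Both operators are of multiplicative-convolution type on $(\R_+,\cdot)$ with nonnegative kernel; the kernel masses in $L_1(\frac{du}{u})$ are $\int_1^\infty t^{-\theta-1}dt=\tfrac1\theta$ (finite because $\theta>0$) and $\int_0^1 t^{-\theta}dt=\tfrac1{1-\theta}$ (finite because $\theta<1$), so Young's inequality for $(\R_+,\cdot)$ gives boundedness on every $L_m$, $1\le m\le\infty$ — the endpoint $m=\infty$, relevant when $q=\infty$, being just the pointwise estimate. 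I apply them with $m=q/2$.

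Then two short steps close the argument. \emph{Step A ($I_0\lesssim I_1$):} squaring $d^0(t)\le D(\int_t^T[d^1(s)]^2ds)^{1/2}$ and substituting turns the first Hardy estimate into $\Phi_q\big((T-t)^{-\theta/2}d^0\big)\le \tfrac{D}{\sqrt\theta}\,\Phi_q\big((T-t)^{(1-\theta)/2}d^1\big)$, hence $I_0\le(1+\tfrac{D}{\sqrt\theta})I_1$. \emph{Step B ($I_1\lesssim I_2$):} from $d^1(t)\le A+D(\int_0^t[d^2(u)]^2du)^{1/2}$ and the triangle inequality,
\[ \Phi_q\big((T-t)^{(1-\theta)/2}d^1\big)\le A\,\big\|(T-t)^{(1-\theta)/2}\big\|_{L_q(\frac{dt}{T-t})}+D\,\Phi_q\Big((T-t)^{(1-\theta)/2}\big(\int_0^t[d^2(u)]^2du\big)^{1/2}\Big); \]
the first norm equals a finite constant $c(\theta,q)\,T^{(1-\theta)/2}$ (convergence at $u=0$ uses $\theta<1$), and squaring-and-substituting the second term invokes the second Hardy estimate to give $\le \tfrac{D}{\sqrt{1-\theta}}\,\Phi_q\big((T-t)^{(2-\theta)/2}d^2\big)$, whence $I_1\le c(D,\theta,q,T)\,I_2$. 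Chaining $I_2\le D I_0$, $I_0\le(1+\tfrac{D}{\sqrt\theta})I_1$ and $I_1\le c(D,\theta,q,T)I_2$ closes the cycle and yields all the equivalences claimed in \eqref{eqn:general_interpol}.

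The analytic input — the two Hardy inequalities — is completely standard, so I expect the only real care to go into the bookkeeping: carrying out the substitutions $u=T-t$, $r=T-s$ consistently, passing correctly between $\Phi_q$ and $\|\cdot\|_{L_{q/2}(\frac{du}{u})}$, and invoking the ``averaging from below'' versus ``averaging from above'' form in the right place. It is precisely the signs $\theta>0$ and $1-\theta>0$ that make the relevant kernel masses finite, which is why the hypothesis $\theta\in(0,1)$ cannot be dropped.
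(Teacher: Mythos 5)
Your argument is correct. Note, however, that this paper does not prove the lemma at all: it is imported verbatim as Proposition A.4 of the cited Geiss--Toivola preprint, so there is no internal proof to compare against; what you have written is a self-contained substitute for that citation. Your route — closing the cycle $I_2, I_1 \le D\,I_0$ (pointwise, from $\frac1D (T-t)^{k/2}d^k \le d^0$), $I_0 \lesssim I_1$ and $I_1 \lesssim I_2$ via the two integral hypotheses — is exactly the natural one, and the analytic core is handled cleanly: rewriting $\Phi_q(F)^2=\| |F(T-\cdot)|^2\|_{L_{q/2}((0,T],du/u)}$ with $q/2\ge 1$ reduces both steps to Hardy-type bounds that are multiplicative convolutions with kernels of $L_1(dt/t)$-mass $1/\theta$ and $1/(1-\theta)$, so Young's inequality (and the pointwise bound at $m=\infty$, covering $q=\infty$) gives constants depending only on $(D,\theta,q,T)$, with the $A$-term in Step B absorbed through the finite weight $\|(T-t)^{(1-\theta)/2}\|_{L_q(dt/(T-t))}=c(\theta,q)T^{(1-\theta)/2}$. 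The exponent bookkeeping in both substitutions checks out, the chaining $I_0\le(1+D/\sqrt{\theta})I_1$, $I_1\le c\,I_2$, $I_2\le D\,I_0$ yields all pairwise equivalences, and your closing remark correctly identifies where $\theta>0$ and $\theta<1$ enter (finiteness of the two kernel masses), which is the same mechanism underlying the cited reference's proof. No gaps.
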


\subsection{Proof of Theorem \ref{thm:equivalence}}
We let
\equa
d^0(t) &:= & \sqrt{T-t} + \| M_T -\ept M_T \|_{L_p(\P)},\\
d^1(t) &:= & 1 + \noo \nabla v(t,X_t) \rrm_{L_p(\P)}, \\
d^2(t) &:= & 1 + \noo    D^2 v(t,X_t) \rrm_{L_p(\P)}.
\tion
From Lemma \ref{lemma:H1} we get that
\equa
      d^1(t) 
& = & 1 + \|\nabla v(t,X_t)\|_{L_p(\P)} \\
&\le& 1+ c_{\eqref{eqn:lemma:H1}}(T-t)^{-\frac{1}{2}} \| M_T -\ept M_T \|_{L_p(\P)} 
       +c_{\eqref{eqn:lemma:H1}}(T-t) \| M_T\|_{L_p(\P)} \\
&\le&  (T-t)^{-\frac{1}{2}} [ 1 + c_{\eqref{eqn:lemma:H1}} 
                                + c_{\eqref{eqn:lemma:H1}}T\| M_T \|_{L_p(\P)}] \\
&   &  \left [ \sqrt{T-t} + \| M_T -\ept M_T \|_{L_p(\P)} \right ] \\
& = &  (T-t)^{-\frac{1}{2}} [ 1 + c_{\eqref{eqn:lemma:H1}} + c_{\eqref{eqn:lemma:H1}}T\| M_T \|_{L_p(\P)}]
       d^0(t).
\tion
From Lemma \ref{lemma:H2_new} we get that
\equa
      d^2(t)
& = & 1 + \noo D^2 v(t,X_t) \rrm_{L_p(\P)} \\
&\le& 1 +  c_{\eqref{eqn:lemma:H2_new}} \bigg [ 
      \frac{\| g(X_T) - \E_\P^{\cF_t}g(X_T)\|_{L_p(\P)}}{T-t} +  \sqrt{T-t} \| M^*\|_{L_p(\P)} \bigg ].
\tion
Using Remark \ref{remark:thm_equivalence}(5) we have that
\begin{multline*}
    \| g(X_T) - \ept g(X_T) \|_{L_p(\P)} \\
\le 2 e^{\|k\|_\infty T} \left [ \|k\|_\infty (T-t) \|M_T\|_{L_p(\P)}
    + \| M_T - \ept M_T \|_{L_p(\P)} \right ].
\end{multline*}    
Together with the previous estimate we obtain a constant $c>0$ depending at most
on $(c_{\eqref{eqn:lemma:H2_new}},k,T,\|M^*\|_{L_p(\P)})$ such that
\[ d^2(t) \le c(T-t)^{-1} d^0(t).\]
From Lemma \ref{lemma:upper_bound_diffenerence_conditional_expectations} we get that
\equa
      d^0(t)
& = & \sqrt{T-t} + \| M_T -\ept M_T \|_{L_p(\P)} \\
&\le& \sqrt{T-t} +  c_{\eqref{eqn:lemma:upper_bound_diffenerence_conditional_expectations}}
      \Big( \int_t^T \|\nabla v(s,X_s)\|_{L_p(\P)}^2ds  \Big)^\frac{1}{2} \\
&\le& [1+c_{\eqref{eqn:lemma:upper_bound_diffenerence_conditional_expectations}}]
      \Big( \int_t^T \left [ 1 + \|\nabla v(s,X_s)\|_{L_p(\P)}\right ]^2 ds  \Big)^\frac{1}{2} \\
& = & [1+c_{\eqref{eqn:lemma:upper_bound_diffenerence_conditional_expectations}}]
      \Big( \int_t^T [ d^1(s)  ]^2 ds  \Big)^\frac{1}{2}.
\tion
Finally, from Lemma \ref{lemma:upper_bound_nabla} for $s=0$ we deduce that
\equa
      d^1(t)
& = & 1 +  \|\nabla v(t,X_t)\|_{L_p(\P)} \\
&\le& 1 +  e^{\|k\|_\infty T} \|K_t^X \nabla v(t,X_t)\|_{L_p(\P)} \\
&\le& 1 +  e^{\|k\|_\infty T} \|K^X_0\nabla v(0,X_0)\|_{L_p(\P)} \\
&   &     +  e^{\|k\|_\infty T}c_{\eqref{eqn:lemma:upper_bound_nabla}} \sqrt{d} 
          \Big [  \| M_T \|_{L_p(\P)} 2\sqrt{T} + \Big( \int_0^t \|D^2 
                  v(r,X_r)\|^2_{L_p(\P)} dr \Big)^\frac{1}{2}  \Big ] \\
&\le& d_1 + d_2 \Big( \int_0^t \|D^2  v(r,X_r)\|^2_{L_p(\P)} dr \Big)^\frac{1}{2}  \\
&\le& d_1 + d_2 \Big( \int_0^t [d^2(r)]^2 dr \Big)^\frac{1}{2}           
\tion
with
\equa
d_1 &:=& 1 +  e^{\|k\|_\infty T} 
      \left [    \|K^X_0\nabla v(0,X_0)\|_{L_p(\P)} 
              +  2 c_{\eqref{eqn:lemma:upper_bound_nabla}} \sqrt{dT} 
           \| M_T \|_{L_p(\P)} \right ], \\
d_2 &:=&  e^{\|k\|_\infty T}c_{\eqref{eqn:lemma:upper_bound_nabla}} \sqrt{d}. 
\tion       

Lemma \ref{lemma:general_interpol} combined with Remark \ref{remark:thm_equivalence}(5) yields the statement. 
\qed


\end{document}